\DeclareMathOperator{\Int}{Int}
\DeclareMathOperator{\card}{card}
\newtheorem{lemma}{Lemma}
\newtheorem{theorem}{Theorem}
\newtheorem{corollary}{Corollary}
\newtheorem{proposition}{Proposition}
\theoremstyle{remark}
\newtheorem{definition}{Definition}
\newtheorem{remark}{Remark}
\DeclareMathOperator{\wind}{wind}
\begin{document}
	\renewcommand{\refname}{References}
	\renewcommand{\proofname}{Proof.}
	\renewcommand{\figurename}{Fig.}

	\thispagestyle{empty}

	\title[HOPF-TYPE THEOREMS FOR $f$-NEIGHBORS]{HOPF-TYPE THEOREMS FOR $f$-NEIGHBORS}
	\author{{A.V. Malyutin}, {I.M. Shirokov}}%
	\address{Andrei Valeryevich Malyutin
		\newline\hphantom{iii} St. Petersburg Department
		\newline\hphantom{iii} of Steklov Mathematical Institute,
		%of Russian Academy of Sciences,
		\newline\hphantom{iii} Fontanka, 27,
		\newline\hphantom{iii} 191023, St. Petersburg, Russia
		\newline\hphantom{iii} St. Petersburg State University,
        \newline\hphantom{iii} Universitetskaya Emb., 13B,
        \newline\hphantom{iii} 199034, St. Petersburg, Russia}%
        \email{malyutin@pdmi.ras.ru}%
	\address{Ilya Mikhailovich Shirokov
		\newline\hphantom{iii} St. Petersburg Department
		\newline\hphantom{iii} of Steklov Mathematical Institute,
		%of Russian Academy of Sciences,
		\newline\hphantom{iii} Fontanka, 27,
		\newline\hphantom{iii} 191023, St. Petersburg, Russia}%
	\email{shirokov.im@phystech.edu}%
	
	\thanks{\sc Malyutin, A.V., Shirokov, I.M.,
		Hopf-type theorems for $f$-neighbors}
	\thanks{\copyright \ 2022 Malyutin A.V., Shirokov I.M}
	\thanks{\rm This research project was started during the Summer Research Program for Undergraduates 2021 organized by the Laboratory of Combinatorial and Geometric Structures at MIPT. %?? This program was funded by the Russian Federation Government. 
%??(Grant number 075-15-2019-1926).
		This research was supported by the Ministry of Science and Higher Education of the Russian Federation, agreement 075-15-2019-1620 date 08/11/2019 and 075-15-2022-289 date 06/04/2022.	
		%?? The first author was partially supported by RFBR according to the research project n. 20-01-00070.
		}
	%\thanks{\it Received  January, 1, 2015, published  March, 1,  2015.}%

%	\semrtop \vspace{1cm}
	\maketitle {\small
		\begin{quote}
			\noindent{\sc Abstract. } 
We work within the framework of a program aimed at exploring various extended versions for theorems from a class containing Borsuk--Ulam type theorems, some fixed point theorems, the KKM lemma, Radon, Tverberg, and Helly theorems.
%We explore various extended versions for theorems from a class containing Borsuk--Ulam type theorems, some fixed point theorems, the KKM lemma, etc. 
In this paper we study variations of the Hopf theorem concerning continuous maps of a compact Riemannian manifold $M$ of dimension $n$ to $\mathbb{R}^n$. We investigate the case of maps $f\colon M \to \mathbb{R}^m$ with $n < m$ and introduce several notions of varied types of $f$-neighbors, which is a pair of distinct points in $M$ such that $f$ takes it to a ‘small’ set of some type. Next for each type, we ask what distances on $M$ are realized as distances between $f$-neighbors of this type and study various characteristics of this set of distances. One of our main results is as follows. Let $f\colon M \to \mathbb{R}^{m}$ be a continuous map. We say that two distinct points $a$ and $b$ in $M$ are visual $f$-neighbors if the segment in $\mathbb{R}^{m}$ with endpoints $f(a)$ and $f(b)$ intersects $f(M)$ only at $f(a)$ and $f(b)$. Then the set of distances that are realized as distances between visual $f$-neighbors is infinite. Besides we generalize the Hopf theorem in a quantitative sense. 

			\medskip
			
			\noindent{\bf Keywords:} Borsuk--Ulam type theorems, the Hopf theorem, winding number, locally injective.
		\end{quote}
	}

	\section{Introduction}
	
	In this paper we continue to work on the program, started in~\cite{MM18}, of exploring various extended versions for the so-called ‘point-type’ theorems. By the ‘point-type’ theorems we mean those in which a singleton or a point plays a central role. This class of theorems includes in particular the following subclasses.
	
	\begin{enumerate}
		\item Fixed-point theorems (Brouwer, Kakutani, etc.);
		\item Theorems on the existence of common points for subsets
		(Radon, Tverberg, Helly, Knaster--Kuratowski--Mazurkiewicz, etc.);
		\item Theorems on taking sets of a large diameter to a singleton (Borsuk--Ulam, Hopf, topological versions of Radon and Tverberg theorems, etc.).
	\end{enumerate}
	
	The first driving idea of the program is to cover the cases of arbitrary dimensions and new classes of spaces by relaxing the ‘point-type’ conditions. For example, we replace conditions of the forms ‘the map has a fixed point,’ ‘subsets have a common point,’ and ‘the image of a subset that is large in some sense is a singleton’ with milder ones ‘the map has a point that moves slightly in some sense,’ ‘subsets have an equidistant point,’ and ‘the image of a subset that is large in some sense is a subset that is small in some sense’ respectively.
	
	The Borsuk--Ulam and Radon theorems, as well as the Knaster--Kuratowski--Mazurkiewicz lemma, have counterparts obtained in this way (see~\cite{MM18}). In this paper we operate in the Borsuk--Ulam subclass of theorems (3) and concentrate on extensions related to the Hopf theorem \cite{H44}:
	
	\begin{theorem}[H. Hopf \cite{H44}]
	\label{th:Hopf}
		Let $n$ be a positive integer, let $M$ be a compact Riemannian manifold of dimension $n$, and let $f\colon M \to \mathbb{R}^n$ be a continuous map. Then for any prescribed $\delta > 0$, there exists a pair $\{x,y\} \in M \times M$ such that $f(x) = f(y)$ and $x$ and $y$ are joined by a geodesic of length~$\delta$.
	\end{theorem}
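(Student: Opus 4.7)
The plan is to reduce the statement to a zero-of-a-map problem on the unit tangent bundle $SM$. Define
\[
\Phi_\delta\colon SM \to \mathbb{R}^n,\qquad \Phi_\delta(x,v) := f(\exp_x(\delta v))-f(x),
\]
so that a zero $(x,v)$ of $\Phi_\delta$ supplies exactly the pair $\{x,\exp_x(\delta v)\}$ required by the theorem. After reducing to smooth generic $f$ by uniform approximation and a compactness argument (passing to a convergent subsequence of zeros for an approximating sequence), the task becomes to show that $\Phi_\delta^{-1}(0)$ is nonempty for every $\delta>0$.

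The key structural observation is that $SM$ carries the involution $\sigma_\delta(x,v)=(\exp_x(\delta v),-\dot\gamma_{x,v}(\delta))$, which reverses the length-$\delta$ geodesic arc issuing from $(x,v)$; a direct check gives $\sigma_\delta^2=\mathrm{id}$ and $\Phi_\delta\circ\sigma_\delta=-\Phi_\delta$. If $\Phi_\delta$ were nowhere zero, its normalization $\Phi_\delta/|\Phi_\delta|\colon SM\to S^{n-1}$ would be $\mathbb{Z}/2$-equivariant between the $\sigma_\delta$-action on the source and the antipodal action on the target, and the aim becomes to extract a topological contradiction.

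I would combine two complementary obstructions. With respect to the sphere-bundle projection $\pi\colon SM\to M$: at a regular point $x$ of $f$, Taylor expanding in $\delta$ gives $\Phi_\delta|_{S_xM}\approx\delta\,df_x$, so the fiber restriction is a degree $\pm 1$ map $S^{n-1}\to S^{n-1}$, and the Gysin sequence then forces the Euler class of $TM$ (and hence $\chi(M)$) to vanish. From the equivariant side, for those $\delta$ at which $\sigma_\delta$ acts freely, the quotient $Q_\delta=SM/\sigma_\delta$ is a manifold and $\Phi_\delta$ descends to a nowhere-zero section of the rank-$n$ twisted bundle $L^{\oplus n}\to Q_\delta$, where $L$ is the sign line bundle of the double cover $SM\to Q_\delta$; the existence of such a section is obstructed by $w_1(L)^n\in H^n(Q_\delta;\mathbb{Z}/2)$.

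The hardest step will be to close the gap in cases where both individual obstructions can vanish (for example $M=T^n$); here an analytic input is indispensable. Liouville invariance of the geodesic flow on $SM$ yields the mean-zero identity $\int_{SM}\Phi_\delta = 0$, which already settles the case $n=1$ via the intermediate value theorem on each component of $SM$. For general $n$ the main work is to weave this mean-zero constraint together with the equivariant-bundle picture to force a common zero; the non-generic values of $\delta$ at which $\sigma_\delta$ has fixed points, corresponding to palindromic closed geodesics of $M$ of length $\delta$, are then handled separately by a limiting argument exploiting closedness in $(0,\infty)$ of the set of $\delta$'s realized by $f$-equal pairs.
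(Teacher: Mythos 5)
Your reduction to a zero of $\Phi_\delta(x,v)=f(\exp_x(\delta v))-f(x)$ on $SM$, and the involution $\sigma_\delta$ with $\Phi_\delta\circ\sigma_\delta=-\Phi_\delta$, are exactly the right starting data (they correspond to the maps $T_{s,p,\delta}$ and the antipodal symmetry at $s=-\delta/2$ in Hopf's argument). But the proposal stops short of a proof precisely where the theorem lives: you yourself note that both of your global obstructions (vanishing of the Euler class via the Gysin sequence, and $w_1(L)^n$ on the quotient $Q_\delta$) can be zero, e.g.\ for $M=T^n$ and more generally for any $M$ with $\chi(M)=0$ --- which includes every odd-dimensional $M$ --- and the promised way to ``weave the mean-zero Liouville identity together with the equivariant-bundle picture'' is left entirely unexecuted beyond $n=1$. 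That is not a corner case to be patched later; it is the main content. There are also two subsidiary problems: the fiberwise degree-$\pm1$ claim rests on the Taylor expansion $\Phi_\delta|_{S_xM}\approx\delta\,df_x$, which is only valid for small $\delta$ (and needs a regular point of $f$, which a merely continuous, or even smooth, $f$ need not provide), whereas the theorem prescribes an arbitrary $\delta>0$; and $\int_{SM}\Phi_\delta\,d\mu=0$ is far too weak an analytic input, since a nowhere-zero $\mathbb{R}^n$-valued map with $n\ge 2$ can easily have zero mean.

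The idea you are missing, and which makes the paper's (Hopf's) proof work for every $M$ and every $\delta$ with no characteristic-class hypotheses, is to localize and to use the translation parameter along the geodesic as a homotopy parameter. Fix $p$ and set $V(\mathfrak{F})_{s,p,\delta}=f(\gamma_{\mathfrak{F}}(s+\delta))-f(\gamma_{\mathfrak{F}}(s))$; if the theorem fails, normalizing gives maps $T_{s,p,\delta}\colon \mathbb{S}^{n-1}_p\to\mathbb{S}^{n-1}$ for all $s$. At $s=-\delta/2$ the map is antipodal (this is your $\sigma_\delta$-symmetry, seen on a single fiber), hence has odd mod-$2$ degree. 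On the other hand, choosing $p=\xi$ to be a maximum point of the last coordinate $f_n$, the map $T_{0,\xi,\delta}$ lands in the closed hemisphere $x_n\le 0$, hence has degree $0$. Sliding $s$ from $-\delta/2$ to $0$ is a homotopy between the two, a contradiction. The maximum-point trick replaces all of your global bundle-theoretic machinery and the Liouville identity, and it is insensitive to the size of $\delta$ (one only normalizes away closed geodesics of length $\delta$, which give the desired pair trivially). As it stands, your proposal is a program whose decisive step is open, so it does not establish the theorem.
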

	
	The Hopf theorem has a rigid restriction on the codomain dimension and involves the strict condition $f(x)=f(y)$. A~question we are interested in is whether the maps not satisfying dimension restrictions give sufficiently large collections of pairs $\{x,y\} \in M \times M$ if we replace the requirement $f(x) = f(y)$ with a requirement of the form ‘$f(x)$ and $f(y)$ are in some sense close to each other.’ We examine several versions of ‘proximity to each other,’ which are presented in the following definition of $f$-neighbors of various types.
%To formalize the idea of `proximity to each other' we introduce several notions of $f$-neighbors of various types.
	
	\begin{definition}
	\label{def:f-neighbors}
		Let $(\mathcal{M}, d)$ be a metric space with distance function~$d$, and let $f\colon \mathcal{M} \to \mathbb{R}^m$ be a continuous map. 
		Distinct points $a$ and $b$ in $\mathcal{M}$ are called
		\begin{enumerate}
			\item \emph{$f$-neighbors} if $f(a) = f(b)$;
			\item \emph{spherical $f$-neighbors} if either $f(a) = f(b)$ or there exists a Euclidean ball $B^m \subset \mathbb{R}^m$ such that\footnote{We use $\partial X$ and $\Int X$ to denote the boundary and interior, respectively, of a subset~$X$ in a topological space.} $\{f(a), f(b)\} \subset \partial B^m $ and $f(\mathcal{M}) \cap \Int B^m = \emptyset$;
			\item \emph{visual $f$-neighbors} if either $f(a) = f(b)$ or the segment with endpoints $f(a)$ and $f(b)$ intersects $f(\mathcal{M})$ only at these two points;
			\item \emph{topological $f$-neighbors} if either $f(a) = f(b)$ or some path in $\mathbb{R}^{m}$ with endpoints at $f(a)$ and $f(b)$ intersects $f(\mathcal{M})$ only at these two points.
		\end{enumerate}
	\end{definition}
	
    To ‘measure’ collections of $f$-neighbors of various types we use the sets of distances that are realized as distances between $f$-neighbors.  
	
	\begin{definition}
		We introduce the following notation:
		\begin{enumerate}
			\item $\Omega_f = \{d(a,b) \in \mathbb{R} \:|$ $a$ and $b$ in $\mathcal{M}$ are $f$-neighbors$\}$; 
			\item $\Omega_f^{sph} = \{d(a,b) \in \mathbb{R} \:|$ $a$ and $b$ in $\mathcal{M}$ are spherical $f$-neighbors$\}$;
			\item $\Omega_f^{vis} = \{d(a,b) \in \mathbb{R} \:|$ $a$ and $b$ in $\mathcal{M}$ are visual $f$-neighbors$\}$; 
			\item $\Omega_f^{top} = \{d(a,b) \in \mathbb{R} \:|$ $a$ and $b$ in $\mathcal{M}$ are topological $f$-neighbors$\}$.
		\end{enumerate}
	\end{definition}
	
	Then $\Omega_f \subseteq \Omega_f^{sph} \subseteq \Omega_f^{vis} \subseteq \Omega_f^{top}$. 
	
	The Hopf theorem implies that, in its settings and with respect to the natural path-metric on~$M$, each of the sets $\Omega_f$, $\Omega_f^{sph}$, $\Omega_f^{vis}$, and $\Omega_f^{top}$ is continual and contains an interval adjacent to~$0$. Clearly, any embedding $f\colon M \to \mathbb{R}^m$ yields empty~$\Omega_f$. The following theorem (see \cite{MM18}) shows that $\Omega_f^{sph}$ is nonempty for the case of continuous maps $f\colon\mathbb{S}^n \to \mathbb{R}^m$ with $n < m$.
	
	\begin{theorem}[A.\,V. Malyutin, O.\,R. Musin \cite{MM18}]
		Let $m$ and $n$ be positive integers such that $n < m$, let~\,$\mathbb{S}^n$ be a Euclidean unit $n$-sphere in the Euclidean $(n+1)$-space $\mathbb{R}^{n+1}$, and let~\,$\mathbb{S}^n \to \mathbb{R}^m$ be a continuous map. Then there are spherical $f$-neighbors $x$ and $y$ in $\mathbb{S}^n$ such that the Euclidean distance $|x-y|$ is at least $\sqrt{(n+2)/n}$.
	\end{theorem}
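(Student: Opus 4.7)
The plan is to reduce the theorem to a diameter estimate on the preimage of the boundary of the minimum enclosing ball of $f(\mathbb{S}^n)$, and then attack that estimate via a Borsuk--Ulam argument combined with Jung's inequality.

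Let $\bar{B}(c,R)\subset\mathbb{R}^m$ be the unique closed ball of smallest radius containing the compact image $f(\mathbb{S}^n)$. If $R=0$, then $f$ is constant, every antipodal pair in $\mathbb{S}^n$ consists of $f$-neighbors (hence of spherical $f$-neighbors), and the Euclidean distance $2$ exceeds $\sqrt{(n+2)/n}$ for every $n\ge 1$. So assume $R>0$ and set
\[
T:=\{\,x\in\mathbb{S}^n\,:\,|f(x)-c|=R\,\}.
\]
By the classical extremal characterization of the smallest enclosing ball, $c\in\operatorname{conv}f(T)$; in particular $|f(T)|\ge 2$. The crucial observation is that the ball $\bar{B}(c,R)$ itself witnesses that \emph{every} pair of distinct points $x,y\in T$ is a pair of spherical $f$-neighbors, since $\{f(x),f(y)\}\subset\partial\bar{B}(c,R)$ and $f(\mathbb{S}^n)\cap\operatorname{Int}\bar{B}(c,R)=\emptyset$. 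Thus the theorem reduces to the inequality
\[
\operatorname{diam}(T)\ge\sqrt{(n+2)/n}.
\]

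To establish this diameter bound, I would argue by contradiction: suppose $\operatorname{diam}(T)<\sqrt{(n+2)/n}$. Applying Jung's inequality in $\mathbb{R}^{n+1}$ to $T\subset\mathbb{S}^n$ and then recording the intersection of the Jung ball with the sphere, one concludes that $T$ is contained in an open spherical cap $C_u^+:=\{x\in\mathbb{S}^n:\langle x,u\rangle>\sqrt{(n-1)/(2n)}\}$ for some unit vector $u\in\mathbb{S}^n$. Then the antipodal cap $C_u^-:=-C_u^+$ is disjoint from $T$, and by compactness $\max_{x\in\overline{C_u^-}}|f(x)-c|<R$. I would then apply Borsuk--Ulam to a suitable odd continuous map $h\colon\mathbb{S}^n\to\mathbb{R}^n$ built from the components of $f-c$ relative to a carefully chosen $n$-dimensional subspace of $\mathbb{R}^m$; the resulting Borsuk--Ulam zero, combined with the strict inequality on $\overline{C_u^-}$, should enable translating $c$ in a direction orthogonal to that subspace to produce a strictly smaller enclosing ball of $f(\mathbb{S}^n)$, contradicting the minimality of $R$.

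The principal obstacle is calibrating the auxiliary map $h$ -- equivalently, choosing the subspace -- so that the Jung-type cap confinement on the domain translates into a sharp improvement of the enclosing radius matching the constant $\sqrt{(n+2)/n}$. The expected extremal configuration, in which all estimates become equalities, consists of an $(n+1)$-simplex of equally spaced extremal points in $\mathbb{S}^n$ whose $f$-images form a regular $(n+1)$-simplex on $\partial\bar{B}(c,R)$; matching this simplex structure simultaneously in the domain $\mathbb{S}^n$ and the codomain $\mathbb{R}^m$ is the technical heart of the argument.
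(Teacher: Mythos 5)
There is no proof of this statement in the paper itself---it is quoted from \cite{MM18}---so your proposal has to stand on its own, and it does not: the reduction fails at its first step. For $x,y\in T$ you claim that the minimum enclosing ball $\bar{B}(c,R)$ itself witnesses that $x$ and $y$ are spherical $f$-neighbors, i.e.\ that $f(\mathbb{S}^n)\cap\Int\bar{B}(c,R)=\emptyset$. But $\bar{B}(c,R)$ was chosen to \emph{contain} $f(\mathbb{S}^n)$, so except in the degenerate case where the image lies entirely on the sphere $\partial\bar{B}(c,R)$, the interior of the minimal enclosing ball contains most of the image. The definition of spherical $f$-neighbors requires an \emph{empty} ball---one whose interior avoids $f(\mathbb{S}^n)$---passing through both image points, and the minimal enclosing ball is exactly the wrong object for that. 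A concrete illustration: let $f\colon\mathbb{S}^1\to\mathbb{R}^2$ be a closed Peano-type curve whose image is the closed unit disk. Then $\bar{B}(c,R)$ is that disk and its interior is full of image points; moreover no empty ball can touch the boundary circle at two distinct points (a ball with interior disjoint from a convex body meets it in at most one point, by a separating-line argument and strict convexity of balls), so for this map all spherical $f$-neighbors have $f(x)=f(y)$---pairs your argument never produces. This is not a counterexample to the theorem, only to your reduction.

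The statement you reduce to is also false in general: one can map an arbitrarily small arc of $\mathbb{S}^1$ (or cap of $\mathbb{S}^n$) onto the entire contact set $f(\mathbb{S}^n)\cap\partial\bar{B}(c,R)$ while keeping the rest of the image strictly inside the ball, so $T$ can have arbitrarily small diameter and $\diam(T)\ge\sqrt{(n+2)/n}$ need not hold. Consequently the Jung/Borsuk--Ulam part cannot rescue the plan in its present form, and in any case that part is only a sketch whose decisive step---the calibration of the auxiliary odd map $h$ and the translation of the cap confinement into a smaller enclosing ball---you yourself flag as unresolved. What the theorem actually requires is a mechanism that produces an \emph{empty} ball touching $f(\mathbb{S}^n)$ at two points whose preimages are quantitatively far apart in $\mathbb{S}^n$; that is the real content of the Malyutin--Musin argument, and it is missing here.
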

	
	The main point of the present paper is further study of properties of sets $\Omega_f^{sph}, \Omega_f^{vis}$, and $\Omega_f^{top}$ for the case of continuous maps $f\colon M\to \mathbb{R}^m$, where $M$ is a compact Riemannian manifold of dimension $\dim M <m$. 
	
	The paper is organised as follows. 
	First in section ‘Negative results’ we give some examples showing that in general even $\Omega_f^{top}$ can contain large ‘holes’ and be arbitrarily ‘small’ in some sense. 
	Next, in section ‘Positive results,’ we prove that $\Omega_f^{vis}$ is infinite for continuous maps $f\colon M \to \mathbb{R}^m$ of compact Riemannian manifolds (with any intrinsic metrics compatible with the topology of~$M$), and obtain the same result for $\Omega_f^{sph}$ in the case of continuous maps $f\colon \mathbb{S}^1 \to \mathbb{R}^2$.	
	In the last section we generalize the Hopf theorem in a quantitative sense.
	
	\section{Negative results}
	
	%In this section we will denote by $d$ the angular distance, induced by the Euclidean metric in $\mathbb{R}^{n+1}$, between points of $\mathbb{S}^{n}$. 
	First we give an example of a continuous map $f\colon \mathbb{S}^1 \to \mathbb{R}^2$ where, unlike in the Hopf theorem, the set $\Omega_f^{vis}$ does not cover the interval $(0,\pi]$. 
	
	\begin{proposition}[Example 1]
	Let~\,$\mathbb{S}^1$ be a Euclidean circle in~$\mathbb{R}^2$, and let $d$ be the angular distance on~\,$\mathbb{S}^1$. Then for any prescribed $\varepsilon \in (0, \frac{\pi}{3})$, there exists a continuous map $f_{\varepsilon}\colon \mathbb{S}^1 \to \mathbb{R}^{2}$ such that $\Omega_{f_\varepsilon}^{vis} = (0, \varepsilon] \cup [\frac{2\pi}{3} - \varepsilon, \pi]$.
	\end{proposition}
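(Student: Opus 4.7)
My plan is to construct $f_\varepsilon$ explicitly as a continuous map tracing an equilateral triangle $T\subset \mathbb{R}^2$ (with vertices $V_1,V_2,V_3$) decorated by inward ``spurs'' at its vertices, and then verify the claimed equality by case analysis.

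\textbf{Construction.} Partition $\mathbb{S}^1$ into six consecutive arcs of lengths $\varepsilon,\frac{2\pi}{3}-\varepsilon,\varepsilon,\frac{2\pi}{3}-\varepsilon,\varepsilon,\frac{2\pi}{3}-\varepsilon$. The map $f_\varepsilon$ cyclically traces, in order: an out-and-back traversal of a spur $m_1$ based at $V_1$ (a short straight segment of length $\ell$ along the bisector at $V_1$, to be tuned); the side $V_1V_2$ of $T$; an out-and-back traversal of $m_2$; the side $V_2V_3$; an out-and-back traversal of $m_3$; and the side $V_3V_1$. The back-and-forth traversal of each spur produces a one-parameter family of $f$-neighbor pairs, symmetric about the turnaround, whose $\mathbb{S}^1$-distances exactly fill $(0,\varepsilon]$, giving $(0,\varepsilon] \subset \Omega^{vis}_{f_\varepsilon}$.

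\textbf{The large interval and the gap.} The decisive case concerns pairs $(a,b)$ whose images lie on two adjacent triangle sides sharing a vertex $V_j$. Let $\delta_a,\delta_b\in[0,1]$ parameterize the normalized distances from $V_j$ along the respective sides. A direct plane-geometric computation shows that the chord $[f_\varepsilon(a),f_\varepsilon(b)]$ meets the bisector from $V_j$ at distance $\tau = \frac{3\delta_a\delta_b}{\delta_a+\delta_b}$ from $V_j$, so the chord is blocked by the spur $m_j$ (hence fails to be a visual neighbor) iff $\tau\leq\ell$. Since the $\mathbb{S}^1$-distance $d(a,b)$ depends monotonically and affinely on $\delta_a+\delta_b$, and since $\tau$ at fixed $\delta_a+\delta_b$ is maximized by $\delta_a=\delta_b$ (with value $\tfrac{3(\delta_a+\delta_b)}{4}$), tuning $\ell$ to the critical threshold forces blocking precisely for pairs with $\mathbb{S}^1$-distance in $(\varepsilon,\frac{2\pi}{3}-\varepsilon)$ while leaving pairs at larger distances unblocked. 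The remaining pair types (both on one spur, both on one side, two different spurs, or a mixed spur/side pair) are dealt with case by case, and their contributions either fail to be visual neighbors or land inside $(0,\varepsilon]\cup[\frac{2\pi}{3}-\varepsilon,\pi]$.

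\textbf{Main obstacle.} The principal difficulty is in the mixed configurations, especially pairs with one point on a spur $m_j$ and the other on a triangle side adjacent to $V_j$: for such pairs the chord can locally evade all three spurs and all triangle sides, producing a visual neighbor at a potentially forbidden intermediate $\mathbb{S}^1$-distance. Preventing this may require strengthening the construction---for example, extending each spur through the interior all the way to the opposite side (so the spur becomes a full cevian), using multiple spurs per vertex, or replacing the straight spur by a suitably chosen curve---so that every such chord is blocked by some piece of $f_\varepsilon(\mathbb{S}^1)$. The careful geometric check verifying the exact equality $\Omega^{vis}_{f_\varepsilon} = (0,\varepsilon]\cup[\frac{2\pi}{3}-\varepsilon,\pi]$, including attainment of the boundary value $d = \frac{2\pi}{3}-\varepsilon$ and the exclusion of spurious visual neighbors from the open gap, is the technical heart of the proof.
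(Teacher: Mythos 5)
Your construction does not establish the stated equality, and the obstacle you flag at the end is in fact fatal to it as it stands, not merely a technical check left to do. Take a point $X$ on the spur $m_1$ at a small distance $r>0$ from $V_1$ and a point $P$ on the adjacent side $V_1V_2$ at normalized distance $\delta_a$ from $V_1$. The open chord $XP$ lies in the open wedge between the bisector at $V_1$ and that side, and for small $r$ it meets neither the other sides nor the other spurs (its $x$-extent along the bisector stays below the region occupied by $m_2,m_3$), so $X$ and $P$ are visual neighbors for every $\delta_a\in(0,1)$. Their preimages are at angular distance approximately $\delta_a\bigl(\tfrac{2\pi}{3}-\varepsilon\bigr)$ (up to a summand of at most $\varepsilon/2$ from the position of $X$ inside the spur arc), which sweeps through the whole forbidden gap $(\varepsilon,\tfrac{2\pi}{3}-\varepsilon)$ as $\delta_a$ varies. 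So $\Omega^{vis}_{f_\varepsilon}$ for your map contains gap values, and no tuning of $\ell$ removes them; nor do the suggested repairs obviously help (a spur hugging an adjacent side still yields short unobstructed chords between circle-nearby points at gap distances, and a full cevian creates new visibility pairs of its own). A secondary inaccuracy: blockedness of an adjacent-side pair is governed by $\delta_a\delta_b/(\delta_a+\delta_b)$, not by $d(a,b)$ (equivalently $\delta_a+\delta_b$) alone, so ``blocking precisely for distances in the gap'' cannot be arranged by a single threshold $\ell$; to kill \emph{all} gap pairs you need $\ell$ near the balanced-pair threshold, and then chords realizing distances near $\pi$ pass essentially through the tips of the other two spurs, so the attainment of $[\tfrac{2\pi}{3}-\varepsilon,\pi]$ also needs a separate, delicate argument.

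The structural point your construction misses, and which the paper's construction is built around, is that the piece of the curve that shields a long arc must come from a part of $\mathbb{S}^1$ that is \emph{far} from that arc, not adjacent to it. In the paper, the image is a trefoil-like curve: each long (blue) arc maps to a lobe, and each short (red) $\varepsilon$-arc maps to a curve hugging the lobe of a blue arc that is nearly antipodal to it on $\mathbb{S}^1$, with identifications only at three double points. Consequently, pairs that can see each other across the thin crescent between a lobe and its hugging red arc have angular distance in $[\tfrac{2\pi}{3}+\varepsilon,\pi]$ (up to the limit as the crescent thins), pairs seeing each other across the central region bounded by the three red arcs give $[\tfrac{2\pi}{3}-\varepsilon,\tfrac{2\pi}{3}+\varepsilon]$, and same-arc pairs give $(0,\varepsilon]$; no configuration produces gap distances precisely because plane-close pieces are circle-far. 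In your triangle-with-spurs picture the spur at $V_1$ is circle-adjacent to the very sides it sits next to in the plane, which is exactly what produces the forbidden visual pairs above; fixing this requires redesigning the image along the lines of the paper's identification pattern rather than lengthening or curving the spurs.
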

	
	\begin{figure}[h]
	\label{fig:trefoil}
		\centering
		\begin{tikzpicture}
			
			\begin{scope}[scale = 1]
				
				\node[scale=2] at (0,2.2) {\color{blue} \tiny$\alpha$};
				\node[scale=2] at ({2*0.866+0.15},-1.15) {\color{blue} \tiny$\alpha$};
				\node[scale=2] at ({-2*0.866-0.15},-1.15) {\color{blue} \tiny$\alpha$};
				
				\node[scale=2] at ({2*0.866-0.15},1 - 0.15) {\color{red} \tiny$\varepsilon$};
				\node[scale=2] at ({-(2*0.866-0.15)},1 - 0.15) {\color{red} \tiny$\varepsilon$};
				\node[scale=2] at (0,-1.8) {\color{red} \tiny$\varepsilon$};
				
				\draw [red,thick,domain=20:40] plot ({2*cos(\x)}, {2*sin(\x)});
				\draw [red,thick,domain=140:160] plot ({2*cos(\x)}, {2*sin(\x)});
				\draw [red,thick,domain=260:280] plot ({2*cos(\x)}, {2*sin(\x)});
				
				\draw [blue,thick,domain=-80:20] plot ({2*cos(\x)}, {2*sin(\x)});
				\draw [blue,thick,domain=40:140] plot ({2*cos(\x)}, {2*sin(\x)});
				\draw [blue,thick,domain=160:260] plot ({2*cos(\x)}, {2*sin(\x)});
				
				\fill[black] ({2*cos(20}, {2*sin(20)})  circle (1pt) node [right] {\footnotesize $A$};
				\fill[black] ({2*cos(40}, {2*sin(40)})  circle (1pt) node [right] {\footnotesize $B$};
				\fill[black] ({2*cos(140}, {2*sin(140)})  circle (1pt) node [left] {\footnotesize $C$};
				\fill[black] ({2*cos(160}, {2*sin(160)})  circle (1pt) node [left] {\footnotesize $D$};
				\fill[black] ({2*cos(260}, {2*sin(260)})  circle (1pt) node [below] {\footnotesize $F$};
				\fill[black] ({2*cos(280}, {2*sin(280)})  circle (1pt) node [below] {\footnotesize $G$};
			\end{scope}
			
			\begin{scope}[scale = 0.5, xshift = 8cm]
				
				\draw[thick, blue, scale=4] (1,1) to [out=180,in=90] (0,0);
				\draw[thick, blue, scale=4] (0,0) to [out=-90,in=180] (1,-1) to [out = 0, in = 270] (2,0);
				\draw[thick, blue, scale=4] (2,0) to [out = 90, in = 0] (1,1);
				
				\draw[thick, red, scale=4] (1.86605, 0.5) to [out=125, in=55] (0.1339,0.5);
				\draw[thick, red, scale=4] (0.1339,0.5) to [out=245, in=175] (1,-1);
				\draw[thick, red, scale=4] (1,-1) to [out=5, in=295] (1.86605,0.5);
				
				\fill[black] (4*1.86605, 4*0.5)  circle (2pt) node [right] {\footnotesize $A, D$};
				\fill[black] (4*0.1339, 4*0.5)  circle (2pt) node [left] {\footnotesize $B, F$};
				\fill[black] (4*1, -4)  circle (2pt) node [below] {\footnotesize $C, G$};
				
				\node[scale=2] at (4,3.2) {\color{red} \tiny$\varepsilon$};
				\node[scale=2] at (1,-1) {\color{red} \tiny $\varepsilon$};
				\node[scale=2] at (7,-1) {\color{red} \tiny $\varepsilon$};
				
				\node[scale=2] at (4,4.4) {\color{blue} \tiny $\alpha$};
				\node[scale=2] at (0,-1.8) {\color{blue} \tiny $\alpha$};
				\node[scale=2] at (8,-1.8) {\color{blue} \tiny $\alpha$};
				
			\end{scope}
		\end{tikzpicture}
		\caption{\:$\mathbb{S}^1$ left and $f_{\varepsilon}(\mathbb{S}^1)$ right}
		
	\end{figure}
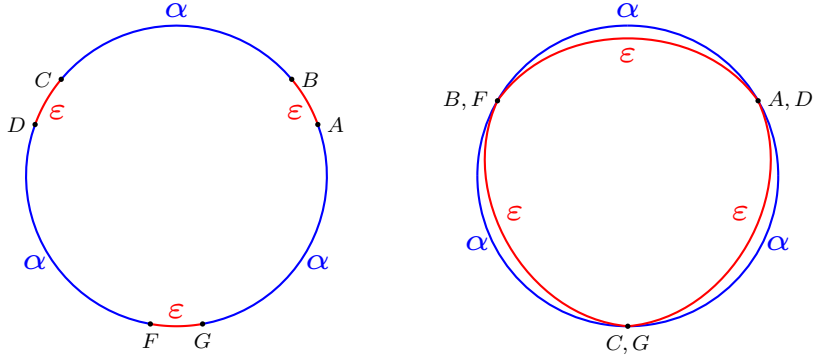
	
	\begin{proof}
A construction of such $f_{\varepsilon}$ is shown in Fig.~1. The parametrization of~$f_{\varepsilon}$ on each of the red and blue arcs is uniform. To find $\Omega_{f_\varepsilon}^{vis}$ we study which boundary points of each connected component of $\mathbb{R}^2 \setminus f(\mathbb{S}^1)$ can ‘see’ each other. It is enough to study a connected component whose boundary is the union of a red $\varepsilon$-arc and a blue $\alpha$-arc, and the component whose boundary is the union of red $\varepsilon$-arcs. If red $\varepsilon$-arcs are sufficently close to the corresponding blue $\alpha$-arcs, then we get $$\Omega_{f_\varepsilon}^{vis} = (0, \varepsilon] \cup \left[\frac{2\pi}{3} - \varepsilon, \pi\right].\qedhere$$ 
	\end{proof}
	
	Next we are going to show that there exist continuous maps $f\colon \mathbb{S}^n \to \mathbb{R}^{n+1}$ with $\Omega_f^{top}$ of arbitrarily small Lebesgue measure. 
	
	\begin{proposition}[Example 2]
	\label{prop:Lebesgue}
		Let $n$ be a positive integer, let $\mathbb{S}^n$ be a Euclidean $n$-sphere in the Euclidean $(n+1)$-space $\mathbb{R}^{n+1}$, and let $d$ be the angular distance on~$\mathbb{S}^n$. Then there exists a sequence of continuous maps $(f_{k}\colon \mathbb{S}^n \to \mathbb{R}^{n+1})_{k=1}^{\infty}$ such that $\lim\limits_{k \to \infty} \mu(\Omega_{f_{k}}^{top}) = 0$, where $\mu$ is the Lebesgue measure on $\mathbb{R}$.
	\end{proposition}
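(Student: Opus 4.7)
My approach rests on the observation that if the image $X := f(\mathbb{S}^n) \subset \mathbb{R}^{n+1}$ has nonempty topological interior, then for any $p \in \Int(X)$ every continuous path in $\mathbb{R}^{n+1}$ starting at $p$ must remain inside $X$ for a short initial time, since $p$ admits an open neighborhood contained in $X$. Consequently, if $(a, b)$ is a topological $f$-neighbor pair with $f(a) \neq f(b)$, both $f(a)$ and $f(b)$ must lie on $\partial X$, so
\[
\Omega_f^{top} \;\subseteq\; \Omega_f \;\cup\; \bigl\{d(a, b) : a \neq b,\; f(a), f(b) \in \partial X\bigr\}.
\]

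For each $k$ I plan to define $f_k$ as a cellular Hilbert-style Peano surjection $\mathbb{S}^n \to X_k$ with $X_k := [0, 1/k]^{n+1}$. Fix a large integer $N = N(k)$ and subdivide $X_k$ into $N$ congruent cubical cells; partition $\mathbb{S}^n$ into a matching sequence of $N$ patches, visited continuously along a Hilbert-type itinerary, each patch being sent surjectively onto its assigned cell by a local Peano map. The itinerary is arranged to satisfy two features simultaneously: (a) consecutive patches are spatial neighbors in $\mathbb{S}^n$ and their assigned cells are spatial neighbors in $X_k$, which forces each fiber of $f_k$ to lie in a bounded union of adjacent patches of total diameter $O(N^{-1/n})$, giving $\mu(\Omega_{f_k}) = O(N^{-1/n})$; and (b) the $O(N^{n/(n+1)})$ cells meeting $\partial X_k$ are visited first in one contiguous block, so that $f_k^{-1}(\partial X_k)$ is confined to a subregion of $\mathbb{S}^n$ of $n$-volume $O(N^{-1/(n+1)})$, hence of diameter $O(N^{-1/(n(n+1))})$, yielding a bound of the same order on the corresponding distance set.

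Combining (a) and (b) with the inclusion above gives $\mu(\Omega_{f_k}^{top}) = O(N^{-1/n} + N^{-1/(n(n+1))}) \to 0$ as $N \to \infty$; taking for example $N(k) = k^{n+1}$ produces the required sequence. The main obstacle is the simultaneous realization of (a) and (b): a standard Hilbert itinerary gives the fiber locality required for (a) but does not cluster the boundary cells, while naively packing the boundary cells first risks breaking the neighbor-to-neighbor continuity at the interface with the interior-filling phase. I would resolve this by a recursive construction in which the boundary layer is first traversed by a space-filling perimeter walk and then the interior is filled by a standard Hilbert-type subdivision attached at a shared face of the last boundary cell, with the patches of $\mathbb{S}^n$ arranged along the same sequence.
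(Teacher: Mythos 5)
Your reduction at the start is fine: if $f(a)\neq f(b)$ and $f(a)$ lies in the interior of $X=f(\mathbb{S}^n)$, no path from $f(a)$ can avoid $X$ except at its endpoints, so indeed $\Omega_f^{top}\subseteq \Omega_f\cup\{d(a,b): a\neq b,\ f(a),f(b)\in\partial X\}$. The construction you build on it, however, has a fatal gap at feature (a). The fact that \emph{consecutive} patches of the itinerary are adjacent in $\mathbb{S}^n$ and their cells are adjacent in $X_k$ does not control the fibers: two cells that are adjacent in the cube may be visited at very distant positions of the itinerary, and any point on their common face then has preimages in two far-apart patches. This already happens for the classical Hilbert curve (the two lower quadrants are traversed at the beginning and at the end, so points on their common edge have preimages at parameter distance about $1/2$), and it cannot be engineered away: a Hamiltonian-type itinerary through the $N$ cells of an $(n+1)$-dimensional grid necessarily places many spatially adjacent cells far apart in the order. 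In fact the goal itself is impossible, not merely the heuristic: if for every $\varepsilon>0$ there were a continuous surjection of $\mathbb{S}^n$ onto a set homeomorphic to $[0,1]^{n+1}$ with all fibers of diameter $<\varepsilon$ (an $\varepsilon$-map), then by the classical theorem of Alexandroff on $\varepsilon$-maps one would get $n+1=\dim [0,1]^{n+1}\le \dim\mathbb{S}^n=n$. So $\mu(\Omega_{f_k})=O(N^{-1/n})$ cannot be achieved this way, and your bound on the first term of the inclusion collapses; some fibers must realize distances bounded away from $0$, and you offer no control on the measure of the set of within-fiber distances. A secondary flaw is in (b): a region of $n$-volume $O(N^{-1/(n+1)})$ need not have small diameter (a contiguous block of patches along a one-dimensional itinerary is typically a long thin ``snake''), so the stated diameter bound does not follow; this part could perhaps be repaired by making the boundary-block patches tiny and clustered, but that does not rescue (a).

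For contrast, the paper avoids space-filling maps altogether: it cuts $\mathbb{S}^n$ into $k$ topological $n$-balls $S_l$ of small diameter, keeps $f_k$ equal to the identity outside small $\varepsilon$-neighborhoods of chosen points $\xi_l\in\Int S_l$, and sends each such neighborhood onto a radial two-sided thickening $T_l$ of $S_l$; these thickenings ``surround'' the sphere and hide the pieces $S_l\setminus U_\varepsilon(\xi_l)$ from one another. Consequently $\Omega_{f_k}^{top}$ is contained in $(0,D_k^{max})$ together with $O(k^2)$ intervals of length $4/k^3$ around the distances $d(\xi_i,\xi_j)$, giving $\mu(\Omega_{f_k}^{top})\le D_k^{max}+4k(k-1)/k^3\to 0$. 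If you want to keep your interior/boundary reduction, you would still need a mechanism of this kind (or some other explicit control) to make the set of distances realized inside fibers have small measure, since making all fibers uniformly small is ruled out by dimension theory.
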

	
	\begin{proof}			
		Let $k\ge 2$ be a positive integer. Then there exists a collection $P_k = \{S_l\}_{l=1}^{k}$ of subsets of~$\mathbb{S}^n$ such that each $S_l$ is a closed topological $n$-ball, $\mathbb{S}^n = \bigcup_{l=1}^{k} S_l$, and $S_l \cap S_m = \partial{S_l} \cap \partial{S_m}$ if $l \neq m$. (Say, $P_2$ is a presentation of~$\mathbb{S}^n$ as the union of two hemispheres, and for $k\ge 3$ one can obtain $P_k$ by replacing an $n$-ball in $P_{k-1}$ with two topological $n$-balls.) 
		For each $l \in \{1,\dots,k\}$, choose a point $\xi_l \in$ $\Int S_l$.
		Then choose $\varepsilon \in (0, 1/k^3)$ such that for each $l \in \{1,\dots,k\}$, $S_l$ contains the $\varepsilon$-neighborhood $U_{\varepsilon}(\xi_l)$ of $\xi_l$ in $\mathbb{S}^n$. 
		
		We construct our $f_k$ as follows. Set $f_k$ to be the identity map on $\mathbb{S}^n \setminus U$, where $U = \bigcup_{l=1}^{k}U_{\varepsilon}(\xi_l)$. Let $T_l$ be a ‘two-side’ thickening of $S_l$ along the radius of $\mathbb{S}^n$. Define $f_k$ on $U_{\varepsilon}(\xi_l)$ such that $f_k(U_{\varepsilon}(\xi_l)) = T_l$.
		
		Then $\mathbb{S}^n$ is ‘surrounded’ by the sets $f_k(U_{\varepsilon}(\xi_l))$ and $f_k(U_{\varepsilon}(\xi_l))$ ‘hides’ $S_l\setminus U_{\varepsilon}(\xi_l)$ from other $S_m\setminus U_{\varepsilon}(\xi_m)$. 
		
		Clearly, $\Omega_{f_k}^{top}$ is contained in the union of the interval $(0, D_k^{max})$, where $D_k^{max}$ is the maximum of diameters of~$S_l$, $l \in \{1, \dots, k\}$, and the intervals of the form $$(d(\xi_i, \xi_j) - 2/k^3, d(\xi_i, \xi_j) + 2/k^3), \quad i, j \in \{1, \dots, k\}.$$ Thus we have 
		$$\mu(\Omega_{f_k}^{top}) \leq D_k^{max} + 4k(k-1)/k^3.$$ 
		
		Taking $P_k$ with increasing $k$ and with $D_k^{max}$ tending to $0$ as $k$ tends to infinity, we obtain a sequence of maps with the desired property.
	\end{proof}

The construction of the above proof of Proposition~\ref{prop:Lebesgue} implies the following proposition.	
	
	\begin{proposition}
Under assumptions of Proposition~\ref{prop:Lebesgue}, for any finite set $F\subset \mathbb{R}$ there exists a continuous map $f_{F} \colon \mathbb{S}^n \to \mathbb{R}^{n+1}$ such that $F \cap \Omega_{f_F}^{top}=\varnothing$.
	\end{proposition}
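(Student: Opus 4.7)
The plan is to reuse the construction from the proof of Proposition~\ref{prop:Lebesgue}, but to choose the partition $P_k$ and the centers $\xi_l$ with an eye on the finite set $F$. Recall that in that proof one establishes the containment
\[
\Omega_{f_k}^{top} \subseteq (0, D_k^{max}) \cup \bigcup_{i,j=1}^{k} \left(d(\xi_i,\xi_j) - \tfrac{2}{k^3},\ d(\xi_i,\xi_j) + \tfrac{2}{k^3}\right).
\]
Since $\Omega_{f}^{top} \subseteq (0,\pi]$, I may assume $F \subseteq (0,\pi]$ and set $t_{\min} := \min F$. So it suffices to arrange that (i) $D_k^{max} < t_{\min}$, and (ii) every pairwise distance $d(\xi_i,\xi_j)$ is at least $\tfrac{2}{k^3}$ away from every element of $F$.

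For (i), I would first fix $k$ large enough that a partition $P_k$ of~$\mathbb{S}^n$ with $D_k^{max} < t_{\min}$ exists. For (ii), I would choose the centers $\xi_l \in \Int S_l$ inductively: suppose $\xi_1,\dots,\xi_{l-1}$ are already chosen. The set of $\xi_l \in \Int S_l$ that violate (ii) is contained in a union of at most $(l-1)\,|F|$ spherical shells (one for each previously chosen $\xi_i$ and each $t \in F$), each of angular width $\tfrac{4}{k^3}$. The total measure of this forbidden set in $\mathbb{S}^n$ is $O(k\,|F|/k^3) = O(|F|/k^2)$, while the measure of $\Int S_l$ is on the order of $\mathrm{vol}(\mathbb{S}^n)/k$ for a balanced partition. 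Taking $k$ large compared to $|F|$, the forbidden set occupies only a small fraction of $\Int S_l$, so an admissible $\xi_l$ exists. Having produced $P_k$ and the $\xi_l$'s, I define $f_F$ to be the map $f_k$ from the previous construction; by the containment above, $F \cap \Omega_{f_F}^{top} = \varnothing$.

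The main technical point is the joint feasibility of (i) and (ii): we need a partition fine enough to make $D_k^{max}$ small, yet with each cell $\Int S_l$ roomy enough to avoid the thin spherical shells arising from~$F$. This is not really an obstacle — we have two independent free parameters ($k$ and the partition) — but it is the one place where care is needed. If desired, one can decouple the two requirements by first picking widely spaced $\xi_l$'s with pairwise distances in the complement of a neighborhood of $F$, and then choosing a partition $P_k$ of sufficiently small mesh whose cells each contain one $\xi_l$ in the interior.
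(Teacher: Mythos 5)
Your proposal is correct and takes essentially the same route as the paper, which offers no separate argument and simply asserts that the construction in the proof of Proposition~\ref{prop:Lebesgue} implies the statement. Your fleshing-out — take $k$ large so that $D_k^{max} < \min F$ (and $2/k^3 < \min F$), then perturb the centers $\xi_l$ via a measure/thin-shell argument so that all distances $d(\xi_i,\xi_j)$ stay $2/k^3$-away from $F$ — is exactly the intended use of that construction.
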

	
	%It is worth to note that there is an 
	We describe an example %? construction ?
	of another kind with the same result as above for the case of continuous maps $f\colon \mathbb{S}^1 \to \mathbb{R}^2$. Namely, for each pair of coprime integers $(3,q)$ with $q \geq 4$, we define a continuous map $f_q \colon \mathbb{S}^1 \to T_{3,q}$, where $T_{3,q}\subset\mathbb{R}^2$ is a standard diagram of torus knot of type $(3,q)$ (see Fig.~2).
	
	\begin{figure}[h]
		\centering
		\begin{tikzpicture}[>=latex]
			\begin{scope}[scale = 0.7]
				
				\draw [red,thick,domain=-10:10] plot ({2*cos(\x)}, {2*sin(\x)});
				\draw [blue,thick,domain=10:80] plot ({2*cos(\x)}, {2*sin(\x)});
				\draw [red,thick,domain=80:100] plot ({2*cos(\x)}, {2*sin(\x)});
				\draw [blue,thick,domain=100:170] plot ({2*cos(\x)}, {2*sin(\x)});
				\draw [red,thick,domain=170:190] plot ({2*cos(\x)}, {2*sin(\x)});
				\draw [blue,thick,domain=190:260] plot ({2*cos(\x)}, {2*sin(\x)});
				\draw [red,thick,domain=260:280] plot ({2*cos(\x)}, {2*sin(\x)});
				\draw [blue,thick,domain=280:350] plot ({2*cos(\x)}, {2*sin(\x)});
				
				\fill[black] ({2*cos(10)}, {2*sin(10)})  circle (1.5pt) node [right] {\footnotesize $B$};
				\fill[black] ({2*cos(10+90)}, {2*sin(10+90)})  circle (1.5pt) node [above] {\footnotesize $D$};
				\fill[black] ({2*cos(10+90+90)}, {2*sin(10+90+90)})  circle (1.5pt) node [left] {\footnotesize $F$};
				\fill[black] ({2*cos(10+270)}, {2*sin(10+270)})  circle (1.5pt) node [below] {\footnotesize $H$};
				
				\fill[black] ({2*cos(-10)}, {2*sin(-10)})  circle (1.5pt) node [right] {\footnotesize $A$};
				\fill[black] ({2*cos(-10+90)}, {2*sin(-10+90)})  circle (1.5pt) node [above] {\footnotesize $C$};
				\fill[black] ({2*cos(-10+90+90)}, {2*sin(-10+90+90)})  circle (1.5pt) node [left] {\footnotesize $E$};
				\fill[black] ({2*cos(-10+270)}, {2*sin(-10+270)})  circle (1.5pt) node [below] {\footnotesize $G$};
				
			\end{scope}
			
			\begin{scope}[scale = 0.65, xshift=7cm]
				\node[right] at (0.5, 0)  {\footnotesize $A, D$};
				\node[below] at (0, -0.7)  {\footnotesize $C, F$};
				\node[above] at (0, 0.7)  {\footnotesize $B, G$};
				\node[left] at (-0.55, 0)  {\footnotesize $H, E$};

				\draw[thick,blue] (2,2) -- (0,0.5) node[currarrow, pos=0.5, xscale=-1.7, sloped, scale=0.7, color = blue] {};
				\draw[thick,red] (0,0.5) -- (-0.5,0) node[currarrow, pos=0.5, xscale=-1.7, sloped, scale=0.7,color = red] {};
				\draw[thick,blue] (-0.5,0) -- (-2,-2) node[currarrow, pos=0.5, xscale=-1.7, sloped, scale=0.7, color = blue] {};
				\draw[thick,blue] (-2,-2) -- (2,-2) node[currarrow, pos=0.5, xscale=1.7, sloped, scale=0.7, color = blue] {};
				\draw[thick,blue] (2,-2) -- (0.5,0) node[currarrow, pos=0.5, xscale=-1.7, sloped, scale=0.7, color = blue] {};
				\draw[thick,red] (0.5,0) -- (0,0.5) node[currarrow, pos=0.5, xscale=-1.7, sloped, scale=0.7, color = red] {};
				\draw[thick,blue] (0,0.5) -- (-2,2) node[currarrow, pos=0.5, xscale=-1.7, sloped, scale=0.7, color = blue] {};
				\draw[thick,blue] (-2,2) -- (-2,-2) node[currarrow, pos=0.5, xscale=1.7, sloped, scale=0.7,color = blue] {};
				\draw[thick,blue] (-2,-2) -- (0,-0.5) node[currarrow, pos=0.5, xscale=1.7, sloped, scale=0.7,color = blue] {};
				\draw[thick,red] (0,-0.5) -- (0.5,0) node[currarrow, pos=0.5, xscale=1.7, sloped, scale=0.7, color = red] {};
				\draw[thick,blue] (0.5,0) -- (2,2) node[currarrow, pos=0.5, xscale=1.7, sloped, scale=0.7, color = blue] {};
				\draw[thick,blue] (2,2) -- (-2,2) node[currarrow, pos=0.5, xscale=-1.7, sloped, scale=0.7, color = blue] {};

				\draw[thick,blue] (-2,2) -- (-0.5,0) node[currarrow, pos=0.5, xscale=1.7, sloped, scale=0.7, color = blue] {};
				\draw[thick,red] (-0.5,0) -- (0,-0.5) node[currarrow, pos=0.5, xscale=1.7, sloped, scale=0.7, color = red] {};
				\draw[thick,blue] (0,-0.5) -- (2,-2) node[currarrow, pos=0.5, xscale=1.7, sloped, scale=0.7, color = blue] {};
				\draw[thick,blue]  (2,-2) -- (2,2) node[currarrow, pos=0.5, xscale=1.7, sloped, scale=0.7, color = blue] {};
				
				\fill[black] (0.5, 0)  circle (1.5pt); 
				\fill[black] (0, 0.5)  circle (1.5pt); 
				\fill[black] (-0.5, 0)  circle (1.5pt); 
				\fill[black] (0, -0.5)  circle (1.5pt);
			\end{scope}
		\end{tikzpicture}
		\caption{Example of $f_{4}$: $\mathbb{S}^1$ left and $f_{\alpha}(\mathbb{S}^1)$ right}
	\end{figure}
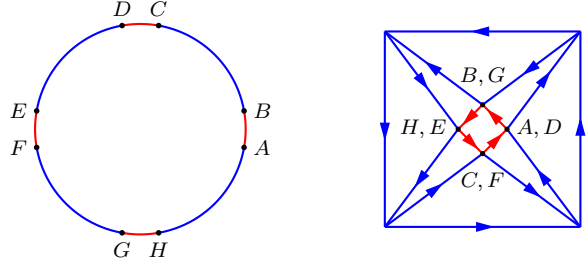
	
	We take red arcs with centers at vertices of a regular $q$-polygon inscribed in~$\mathbb{S}^1$ and construct our $f_q$ with uniform parametrization on each of red and blue arcs as shown in Fig.~2. The image of the union of red arcs forms a regular $q$-polygon, and the endpoints of the image of each blue arc are vertices of this $q$-polygon. We construct a sequence $(f_q)_{q=4}^{\infty}$, with $q$ coprime to~$3$, such that the total length of red arcs decreases as $O(1/q^3)$ as $q$ tends to infinity. 
	
	One can find $\Omega_{f_q}^{sph}$, $\Omega_{f_q}^{vis}$, and $\Omega_{f_q}^{top}$ by studying ‘colorings’ of the set of pairs of points $T^2 = \mathbb{S}^1 \times \mathbb{S}^1$. Namely, we color a pair $(a,b)$ red if $a$ and $b$ are $f$-neighbors of corresponding type and green otherwise. Fig.~3 with an unfolding of colored torus provides an example for $\Omega_{f_4}^{vis}$, where we color a pair $(a,b)$ red if $f(a)$ and $f(b)$ ‘see’ each other.
	
	\begin{figure}[h]
		\begin{center}
			\includegraphics[width=0.25\linewidth]{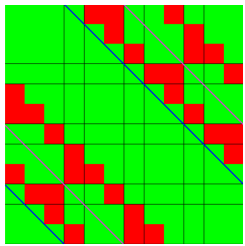}\\
			\caption{Coloring of $\mathbb{S}^1 \times \mathbb{S}^1$}
		\end{center}
	\end{figure}
	
	In Fig.~3, for all points on a line that is parallel to a diagonal, angular distances between corresponding points on $\mathbb{S}^1$ are the same. When we increase $q$, the number of red squares increases and we get more ‘red stripes,’ but they become narrower and the total area of red squares decreases. As a result we have ${\mu(\Omega_{f_{q}}^{vis}) \to 0}$. 
	
	A variation of this construction and similar arguments work for $\Omega_{f_{q}}^{top}$.
	
	It would be interesting to apply the ‘colored torus’ method to study the sets $\Omega_{f}^{sph}$, $\Omega_{f}^{vis}$, and $\Omega_{f}^{top}$ for the case of more complex closed braid diagrams.

	\section{Positive results}
	
	In the previous section we saw that even $\Omega_{f}^{top}$ can have arbitrarily small Lebesgue measure. It is natural to ask, Can $\Omega_{f}^{top}$, $\Omega_{f}^{vis}$, or $\Omega_{f}^{sph}$ be finite? In this section we first prove that $\Omega_{f}^{vis}$ is infinite for the case of continuous maps $f\colon M \to \mathbb{R}^m$, where $M$ is a compact Riemannian manifold (with an intrinsic metric compatible with the topology on $M$), and then we show that $\Omega_{f}^{sph}$ is infinite for the case of continuous maps $f\colon\mathbb{S}^1 \to \mathbb{R}^2$. We need several auxiliary lemmas and definitions.
	
	%%%% any smooth manifold that is paracompact and Hausdorff admits a Rimannian metric. Should we change formulations to smooth paracompact manifolds?
	
		\begin{definition}
	\label{def:locally-injective}
		Let $A$ and $B$ be topological spaces. A continuous map $f\colon A\to B$ is called \emph{locally injective} if any point in $A$ has a neighborhood such that the restriction of $f$ to this neighborhood is injective.
	\end{definition}
	
		\begin{definition}[cf.~Definition~\ref{def:f-neighbors}]
	\label{def:visual neighbors}
Let $Y$ be a subset of $\mathbb{R}^m$. We say that two points $p$ and $q$ in~$Y$ are %\emph{visual neighbors} if $p\neq q$ and the segment with endpoints $p$ and $q$ intersects $Y$ only at these two points.
	\begin{enumerate}
			\item \emph{visual neighbors} (with respect to~$Y$) if $p\neq q$ and the segment with endpoints $p$ and $q$ intersects $Y$ only at these two points;
			\item \emph{spherical neighbors} (with respect to~$Y$) if $p\neq q$ and there exists a Euclidean ball $B^m \subset \mathbb{R}^m$ such that $\{p, q\} \subset \partial B^m$ and $Y \cap \Int B^m = \emptyset$.
		\end{enumerate}
	\end{definition}

		\begin{proposition}
	\label{prop:locally-injective-properties}
		Let $f\colon X \to Z$ be a continuous map of metric spaces.
		If $X$ is compact and $f$ is locally injective, then
		\begin{enumerate}
			\item For any $z\in f(X)$, the inverse image $f^{-1}(z)$ is finite;  
			%\item There exists $\delta>0$ such that no $f$-neighbors $a$ and $b$ have $d(a,b) < \delta$ (where $d$ stands for the distance function on~$X$);
			\item Each point $z\in f(X)$ has a neighborhood $U=U(z)$ such that the restriction of~$f$ to each connected component of $f^{-1}(U)$ is injective.
		\end{enumerate}
	\end{proposition}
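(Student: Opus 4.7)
The plan is to prove the two claims in order, using compactness of $X$ and local injectivity in essentially the same way for both.

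For part (1), the key observation is that local injectivity implies $f^{-1}(z)$ is a discrete subset of $X$. Indeed, if $x\in f^{-1}(z)$ and $V$ is a neighborhood of $x$ on which $f$ is injective, then no other point of $V$ can also map to $z$, so $V\cap f^{-1}(z)=\{x\}$. Since $f^{-1}(z)$ is also closed in $X$ (as the continuous preimage of a closed point), it is compact. A compact discrete subset of a metric space is finite, which gives (1).

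For part (2), I would first invoke (1) to write $f^{-1}(z)=\{x_1,\dots,x_n\}$ and, using local injectivity together with the Hausdorff property of the metric space $X$, choose pairwise disjoint open neighborhoods $V_1,\dots,V_n$ of the $x_i$ with $f|_{V_i}$ injective. Next I would produce the desired neighborhood $U$ of $z$ by a standard compactness argument: the set $K:=X\setminus\bigcup_{i=1}^n V_i$ is closed in $X$ and therefore compact, and $z\notin f(K)$ since $f^{-1}(z)\subset\bigcup_i V_i$; hence $f(K)$ is a compact set in $Z$ not containing $z$, and we may take $U$ to be any open neighborhood of $z$ disjoint from $f(K)$. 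By construction $f^{-1}(U)\subset\bigcup_i V_i$.

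The final step is to check that each connected component of $f^{-1}(U)$ lies inside a single $V_i$, whence injectivity of $f$ on the component follows from injectivity of $f|_{V_i}$. This is immediate: the sets $V_i\cap f^{-1}(U)$ are pairwise disjoint and open in $f^{-1}(U)$, and their union is $f^{-1}(U)$, so they form a clopen partition of $f^{-1}(U)$; any connected subset must lie in one piece. I do not anticipate a genuinely hard step here; the only place where one must be slightly careful is ensuring that the shrinking of $U$ is done \emph{after} the $V_i$ are chosen pairwise disjoint, so that the clopen-partition argument actually applies.
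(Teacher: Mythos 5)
Your proof is correct and follows essentially the same route as the paper: part (1) via compactness forcing $f^{-1}(z)$ to be finite (you phrase it as ``closed discrete subset of a compact space is finite,'' the paper as ``an infinite fiber would have an accumulation point, contradicting local injectivity''), and part (2) via pairwise disjoint injectivity neighborhoods, the compact complement $K$, a neighborhood $U$ of $z$ missing $f(K)$, and the observation that each component of $f^{-1}(U)$ lies in a single $V_i$. No gaps; this matches the paper's argument.
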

	
		\begin{proof}
		\begin{enumerate}
			\item Assume that $f^{-1}(z)$ is infinite for some $z\in f(X)$. Then $f^{-1}(z)$ has an accumulation point since $X$ is compact. Clearly, no accumulation point of $f^{-1}(z)$ has a neighborhood such that the restriction of $f$ to this neighborhood is injective.
			
			%\item Assume to the contrary that for each positive integer~$i$ there exist $f$\nobreakdash-neighbors $a_i$ and $b_i$ with $d(a_i,b_i) < 1/i$. Since $X$ is compact, the sequence $(a_i)_{i=1}^{\infty}$ has converging subsequences. Clearly, no limit point of a converging subsequence of $(a_i)_{i=1}^{\infty}$ has a neighborhood such that the restriction of $f$ to this neighborhood is injective.
			
			\item By assertion~(1) of the proposition, $f^{-1}(z)$ is finite. Suppose $f^{-1}(z)$ consists of $k$ points and denote them by $x_1, \dots, x_k$. Due to the assumed local injectivity of $f$, each of $x_i$'s has an open neighborhood $U_i$ such that the restriction of $f$ to $U_{i}$ is injective. Taking (if necessary) smaller neighborhoods of $x_1$, $\dots$, $x_k$, we can assume that $U_1$, $\dots$, $U_k$ are pairwise disjoint. Observe that the complement $C:=X\setminus(U_1\cup \dots \cup U_k)$ is compact and $z\notin f(C)$. Hence, since a continuous image of a compact set is compact, it follows that there is a neighborhood $W = W(z)$ of~$z$ such that $f(C)\cap W(z) = \emptyset$. Therefore, $f^{-1}(W)$ is contained in $U_1\cup \dots \cup U_k$. Since $U_1$, $\dots$, $U_k$ are all open and pairwise disjoint, it follows that each connected component of $f^{-1}(W)$ is contained in $U_i$ for some~$i$. The statement follows. \qedhere
		\end{enumerate} 
	\end{proof}

\begin{proposition}
	\label{prop:non-locally-injective}
		Let $f\colon X \to Z$ be a continuous map of metric spaces. 
If $X$ is compact then the following conditions are equivalent:
		\begin{enumerate} 
		\item $f$ is not locally injective; 
		\item There exist arbitrarily close $f$-neighbors.
		\end{enumerate} 
\end{proposition}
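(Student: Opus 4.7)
The plan is to prove the two implications separately; compactness is only needed for one direction, while the other follows almost immediately from unpacking Definition~\ref{def:locally-injective}.

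For the implication $(1)\Rightarrow(2)$: I would negate the definition of local injectivity to obtain a point $x_0\in X$ such that \emph{every} neighborhood of $x_0$ fails to be mapped injectively by $f$. Applied to the open balls $B(x_0,1/n)$ for $n=1,2,\dots$, this yields sequences of distinct points $a_n,b_n\in B(x_0,1/n)$ with $f(a_n)=f(b_n)$. By the triangle inequality $d(a_n,b_n)<2/n$, so for any prescribed $\varepsilon>0$ one obtains $f$-neighbors at distance less than $\varepsilon$. Notice that this direction does not need $X$ to be compact.

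For the implication $(2)\Rightarrow(1)$: I would argue by contradiction and make essential use of compactness. Assuming (2), pick a sequence of $f$-neighbor pairs $(a_n,b_n)$ with $d(a_n,b_n)\to 0$. By compactness of $X$, the sequence $(a_n)$ has a subsequence converging to some point $x\in X$; after passing to this subsequence, $d(a_n,b_n)\to 0$ forces $b_n\to x$ as well. Now let $U$ be any neighborhood of $x$. For all sufficiently large $n$ both $a_n$ and $b_n$ lie in $U$, and since $a_n\neq b_n$ but $f(a_n)=f(b_n)$, the restriction $f|_U$ is not injective. Hence $x$ witnesses that $f$ is not locally injective, contradicting the assumption that $f$ is locally injective.

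There is no real obstacle here; the only delicate point is the observation that without compactness the sequence $(a_n)$ in the second implication could fail to accumulate at any point of $X$, in which case the local non-injectivity could not be pinned to a single point. Thus compactness plays exactly the role of guaranteeing an accumulation point. I would present the proof as two short paragraphs mirroring the two implications above.
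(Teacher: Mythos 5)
Your proof is correct and follows essentially the same route as the paper: for $(1)\Rightarrow(2)$ both arguments extract, from a point where local injectivity fails, pairs of $f$-neighbors inside balls $B(x_0,1/n)$ and bound their distance by $2/n$; for $(2)\Rightarrow(1)$ both use compactness to get an accumulation point $x$ of the sequence $(a_n)$ and observe that no neighborhood of $x$ can be mapped injectively. Your remark that compactness is needed only for the second implication is accurate and consistent with the paper's argument.
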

	
	\begin{proof}
If $f$ is not locally injective then there exists $x\in X$ such that for each positive integer~$i$ the open metric ball $B_{1/i}(x)$ of radius $1/i$ centered at~$x$ contains distinct points $a_i$ and $b_i$ with $f(a_i)=f(b_i)$. These $a_i$ and $b_i$ are $f$-neighbors within distance at most $2/i$ from each other.

Conversely, suppose there exist arbitrarily close $f$-neighbors. This means that for each positive integer~$i$, there exist $f$\nobreakdash-neighbors $a_i$ and $b_i$ with $d(a_i,b_i) < 1/i$. Since $X$ is compact, the sequence $(a_i)_{i=1}^{\infty}$ has converging subsequences. Clearly, no limit point of a converging subsequence of $(a_i)_{i=1}^{\infty}$ has a neighborhood such that the restriction of $f$ to this neighborhood is injective.	
	\end{proof}

	\begin{lemma}
	\label{lem:properly-covered}
		Let $X$ be a topological space, let $k$ be a positive integer, and let $(A_1, \dots, A_k)$ be a $k$-element sequence of closed sets in~$X$. Suppose that at least one of these sets is nonempty. A point $P$ in the union $A:=A_1\cup \dots \cup A_k$ is called \emph{properly covered} if there exists a neighborhood $U = U(P)$ of $P$ such that all nonempty sets in the sequence $(U\cap A_1, \dots, U\cap A_k)$ coincide. Then the set of properly covered points is open and everywhere dense in $A$.
	\end{lemma}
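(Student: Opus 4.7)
The plan is to split the claim into openness and density, introducing for each $P\in A$ the ``active index set'' $I(P):=\{i\in\{1,\dots,k\}\colon P\in A_i\}$, which is nonempty exactly when $P\in A$. Closedness of the sets $A_j$ will be used to shrink neighborhoods so that locally only indices in $I(P)$ contribute, and the density part will then follow from a short induction on $|I(P)|$.

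Openness is the easy half. If $P$ is properly covered with witness neighborhood $U$, let $S$ be the common value of the nonempty members of the list $(U\cap A_1,\dots,U\cap A_k)$. The list depends on $U$ alone, not on the base point, so the same $U$ witnesses proper coverage of every point of $U\cap A=S$. Hence the properly covered points form an open-in-$A$ set.

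For density, fix $P\in A$ and an open neighborhood $W$ of $P$. Using that each $A_j$ with $j\notin I(P)$ is closed and misses $P$, I would pass to a smaller open neighborhood $U_0\subseteq W$ of $P$ with $U_0\cap A_j=\varnothing$ for every $j\notin I(P)$. If $|I(P)|=1$, only one of the sets $U_0\cap A_i$ is nonempty, so $P$ itself is already properly covered; this is the induction base. In general, either all sets $U_0\cap A_i$ with $i\in I(P)$ coincide, in which case $P$ is properly covered, or there exist $i,i'\in I(P)$ with $U_0\cap A_i\neq U_0\cap A_{i'}$, and one can pick $Q\in(U_0\cap A_i)\setminus A_{i'}$. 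The exclusion $U_0\cap A_j=\varnothing$ for $j\notin I(P)$ forces $I(Q)\subseteq I(P)$, and $i'\in I(P)\setminus I(Q)$ makes the inclusion strict. Applying the inductive hypothesis to $Q$ with the open neighborhood $U_0\subseteq W$ produces a properly covered point inside $W$.

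The argument is essentially pure bookkeeping; the main thing to keep honest is that the shrinking step to $U_0$ permanently removes every ``inactive'' index, so no later step in the descent can reintroduce an index outside $I(P)$. Together with the fact that $|I(P)|$ is a positive integer that strictly decreases at each inductive step, this guarantees termination. No topological machinery beyond closedness of the $A_i$ and the ability to form finite intersections of open sets is used.
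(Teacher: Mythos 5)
Your proof is correct, but it is organized differently from the paper's. The paper runs a global induction on the length $k$ of the sequence: if not all nonempty $A_i$ coincide, it picks $j$ with $A_j$ nonempty and $A_j\neq A$, passes to the open subspace $X\setminus A_j$ with the closed sets $A_i\setminus A_j$, and inductively produces \emph{one} properly covered point; density is then obtained afterwards by applying this existence statement to the sequence $(O\cap A_1,\dots,O\cap A_k)$ inside an arbitrary open set $O$ meeting $A$, and openness is read off from the definition exactly as you do. You instead induct on the \emph{local} quantity $|I(P)|$, the number of sets through a fixed point, first shrinking to a neighborhood $U_0\subseteq W$ that kills all inactive indices (this is where closedness enters, just as in the paper it enters via openness of $X\setminus A_j$), and then descending to a nearby point $Q$ with $I(Q)\subsetneq I(P)$. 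This buys you a single induction that yields density directly, point by point, with no passage to subspaces and no separate relativization step; the paper's version is slightly more economical in bookkeeping since it never tracks index sets, at the cost of the two-stage ``existence, then localize'' structure. One small wording point in your descent step: from $U_0\cap A_i\neq U_0\cap A_{i'}$ you can only conclude that one of the two differences is nonempty, so you should say ``after relabeling $i$ and $i'$ we may pick $Q\in (U_0\cap A_i)\setminus A_{i'}$''; with that adjustment the strict decrease of $|I(\cdot)|$ and the termination argument are exactly as you state.
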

	
	\begin{proof}
		First we prove that $A$ contains at least one properly covered point. We use induction on $k$. For $k = 1$ the statement is obvious. Let $k > 1$. If all nonempty sets in $(A_1,\dots, A_k)$ coincide, then taking the whole space $X$ as a suitable neighborhood, we get that all points in $A$ are properly covered.
		%
		%% "the" whole space? -- google scholar yes
		%
		Otherwise there are nonempty noncoinciding sets in $(A_1, \dots, A_k)$, and hence there exists $j \in \{1,\dots,k\}$ such that $A_{j}$ is nonempty and $A_{j} \neq A$.	
		Then at least one element in the $(k-1)$-element sequence $$(A_1\setminus A_j, \dots, A_{j-1}\setminus A_j, A_{j+1}\setminus A_j, \dots, A_k\setminus A_j)$$ is nonempty and all elements of this sequence are closed subsets in $X\setminus A_j$.

		Applying the inductive hypothesis to the space $X\setminus A_j$ and the $(k-1)$-element sequence above, we get that there exists a properly covered (in the corresponding sense) point $P$ in $A\setminus A_j$. Since $A_j$ is closed, the sets (in particular neighborhoods of points) that are open in $X\setminus A_j$, are open in $X$ as well. Hence $P$ is properly covered for the initial sequence $(A_1, \dots, A_k)$.
		
		To check that the set of properly covered points is everywhere dense in $A$, it suffices, for any open set $O$ that intersects $A$, to apply the proven statement to the space $O$ and the sequence $(O\cap A_1, O\cap A_2, \dots, O\cap A_k)$. 
		
		The set of properly covered points is open because some open neighborhood (in~$A$) of any properly covered point consists of properly covered points due to the definition. 
	\end{proof}
	
	\begin{lemma}
	\label{lem:compact-non-convex}
		Let $K$ be a compact subset of a Euclidean space~$E$. If $K$ is not convex, then $E$ contains a nondegenerate segment that intersects $K$ only by its endpoints. 
	\end{lemma}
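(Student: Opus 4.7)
The plan is to extract the desired segment from a single line segment $[p,q]$ witnessing the failure of convexity. Since $K$ is not convex, by definition there exist points $p,q\in K$ and a point $r$ on the segment $[p,q]$ with $r\notin K$. I would then restrict attention to this one-dimensional situation and reduce the problem to finding a bounded open gap in a compact subset of an interval.

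Concretely, I parametrize $[p,q]$ linearly by $\gamma\colon[0,1]\to E$ with $\gamma(0)=p$ and $\gamma(1)=q$, and set $C:=\gamma^{-1}(K)\subset[0,1]$. Because $K$ is closed and $\gamma$ is continuous, $C$ is closed in $[0,1]$, hence compact. It contains $0$ and $1$, and the parameter $t^{*}\in(0,1)$ with $\gamma(t^{*})=r$ lies outside $C$. I then define
\[
a\;:=\;\sup\bigl(C\cap[0,t^{*}]\bigr),\qquad b\;:=\;\inf\bigl(C\cap[t^{*},1]\bigr).
\]
Both suprema are attained because $C$ is closed, so $a,b\in C$, while the definitions of sup and inf together with $t^{*}\notin C$ force $a<t^{*}<b$ and $C\cap(a,b)=\emptyset$. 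Consequently the segment with endpoints $\gamma(a),\gamma(b)\in K$ is nondegenerate, and its interior, being $\gamma\bigl((a,b)\bigr)$ with $(a,b)\cap C=\emptyset$, is disjoint from $K$. This segment is the one we are looking for.

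There is essentially no obstacle beyond making sure the sup and inf are actually attained, which is why it is important that $K$ is closed (so that $C$ is closed in $[0,1]$); compactness of $K$ in the full sense is not used in this argument, only closedness. The whole proof therefore reduces to a one-dimensional \emph{first gap} argument on $C\subset[0,1]$.
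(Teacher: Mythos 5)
Your proof is correct, and it is genuinely simpler than the one in the paper. You take any witness of non-convexity, i.e.\ points $p,q\in K$ and $r\in[p,q]\setminus K$, and run a one-dimensional ``gap'' argument on the closed set $C=\gamma^{-1}(K)\subset[0,1]$: the sup/inf around $t^{*}$ are attained because $C$ is closed, $a<t^{*}<b$ because $t^{*}\notin C$, and $(a,b)\cap C=\emptyset$ by maximality/minimality, so $\gamma([a,b])$ is a nondegenerate segment meeting $K$ exactly in its endpoints. Your remark that only closedness of $K$ is used (not boundedness) is also accurate, so your argument even proves a slightly more general statement. The paper instead argues globally: it picks a point $P$ in the relative interior of the convex hull of $K$ with $P\notin K$, inflates a closed ball centered at $P$ until its boundary first touches $K$ at a point $Q$, takes the supporting hyperplane $H$ at $Q$, finds a point $T\in K$ strictly on the ball side of $H$, and then extracts the desired subsegment from $[Q,T]$, using the empty ball to guarantee that a punctured neighborhood of $Q$ on this segment misses $K$ (so the extracted segment is nondegenerate) and compactness of $K$ to locate the first return to $K$. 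Both proofs end with essentially the same ``first gap'' extraction; the paper's heavier maximal-empty-ball setup is in the spirit of the good-disk/medial-axis constructions used elsewhere in the paper, whereas your route gets the lemma as stated with minimal machinery.
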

	
	\begin{proof}
		Since $K$ is compact and not convex, it follows that the relative interior of the convex hull of $K$ contains a point $P$ such that $P \notin K$. By compactness argument it follows that $E$ contains a closed Euclidean ball $B$ centered at $P$ and such that $K \cap \Int B = \emptyset$ while $\partial{B}$ intersects~$K$. Let $Q$ be a point in $B\cap K$, and let $H$ be the hyperplane in $E$ that touches $B$ at~$Q$. Denote by $H^+$ the closed half-space bounded by $H$ and containing~$B$.		
		Since $\Int H^+$ contains $P$ and $P$ lies in the convex hull of $K$, it follows that $\Int H^+$ contains points of $K$ as well.		
		Let $T$ be a point in $K \cap \Int H^+$, and let $I$ be the segment with endpoints $Q$ and $T$. Observe that some punctured neighborhood of $Q$ in $I$ lies in $\Int B$ and hence does not intersect~$K$. Then, since $K$ is compact, it follows that $I$ contains a segment with desirable properties. 		
	\end{proof}

	\begin{theorem} 
\label{th:vis-is-infinite}	
	Let $M$ be a compact Riemannian manifold, let $d\colon M\times M\to[0,\infty)$ be any intrinsic metric on~$M$ compatible with its  topology, and let $f\colon M \to \mathbb{R}^{m}$ be a continuous map. Then $\Omega_f^{vis}$ for $(M,d)$ is infinite.
	%Let $n$ and $m$ be positive integers, let $M$ be a compact Riemannian manifold of dimension $n$, and let $f\colon M \to \mathbb{R}^{m}$ be a continuous map. Then $\Omega_f^{vis}$ is infinite.
	\end{theorem}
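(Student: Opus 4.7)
The plan is to split into two cases depending on whether $f$ is locally injective. In the case where $f$ is not locally injective, Proposition \ref{prop:non-locally-injective} directly produces $f$-neighbors at arbitrarily small positive distances. Since every $f$-neighbor is automatically a visual $f$-neighbor (Definition \ref{def:f-neighbors}), we have $\Omega_f \subseteq \Omega_f^{vis}$, so $\Omega_f^{vis}$ has $0$ as an accumulation point and is infinite.

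Assume now $f$ is locally injective. I first reduce to a convenient geometric setup. By invariance of topological dimension under continuous injections, applied to closures of the small neighborhoods provided by Proposition \ref{prop:locally-injective-properties}, we get $\dim M \le m$. If $\dim M = m$, then Theorem \ref{th:Hopf} applied to $f$ would produce $f$-neighbors at arbitrarily small distances (via geodesics shorter than the injectivity radius), contradicting local injectivity by Proposition \ref{prop:non-locally-injective}; hence $\dim M < m$. An analogous argument shows $f(M)$ cannot be convex: a convex $f(M)$ would have topological dimension $\dim M$, hence lie in an affine subspace of dimension $\dim M$, and restricting the codomain there would put us in the forbidden equal-dimension case. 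So $f(M)$ is compact and non-convex, and Lemma \ref{lem:compact-non-convex} yields a nondegenerate segment $[P_0,Q_0]$ with $P_0, Q_0 \in f(M)$ whose open part is disjoint from $f(M)$. Choosing any preimages $a_0 \in f^{-1}(P_0)$, $b_0 \in f^{-1}(Q_0)$ (finite and nonempty by Proposition \ref{prop:locally-injective-properties}) gives a first visual $f$-neighbor pair.

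To produce infinitely many distinct distances, I plan to deform the construction of Lemma \ref{lem:compact-non-convex}. That construction depends on continuously variable choices---an inner point $P$ in the relative interior of the convex hull of $f(M)$ with $P \notin f(M)$, an auxiliary closed ball $B$ centered at $P$ with $f(M) \cap \Int B = \emptyset$, a boundary point $Q \in \partial B \cap f(M)$, and a witness $T \in f(M) \cap \Int H^{+}$---each ranging over a nonempty open family of admissible values (openness follows from $f(M)$ being closed). Varying these continuously yields a continuous family of visual neighbor pairs in $f(M)$. Using Proposition \ref{prop:locally-injective-properties} for local continuous lifting of preimages, and Lemma \ref{lem:properly-covered} to track combinatorial changes of the local sheet structure along the family, one obtains a continuous family $(a_\lambda, b_\lambda) \in M \times M$ of visual $f$-neighbor pairs with $\lambda \mapsto d(a_\lambda, b_\lambda)$ continuous. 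If this function is non-constant on any connected piece of the parameter space, its image contains a nondegenerate interval, proving $\Omega_f^{vis}$ infinite.

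The main obstacle will be establishing non-constancy of $\lambda \mapsto d(a_\lambda, b_\lambda)$. The approach: the parameter family can be taken of dimension at least $\dim M$ (from the free choice of $P$ in an open subset of $\mathbb{R}^m$), while each level set $\{(a, b) \in M \times M : d(a, b) = c\}$ has codimension one; together with the openness of properly covered points from Lemma \ref{lem:properly-covered} and the independence of the choices of $P$, $Q$, and $T$, a hypothetical constant $d$ would force a rigid identification along the family that is incompatible with this freedom.
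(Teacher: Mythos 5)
Your reduction in the locally injective case is fine as far as it goes: local injectivity plus compactness does force $\dim M < m$ (otherwise Hopf's theorem would contradict Proposition~\ref{prop:non-locally-injective}), and a convex $f(M)$ would indeed lie in an $n$-dimensional affine subspace and be excluded the same way, so Lemma~\ref{lem:compact-non-convex} gives one pair of visual neighbors. But that produces a \emph{single} element of $\Omega_f^{vis}$, and the whole content of the theorem --- infinitely many realized distances --- rests on your deformation step, which has a genuine gap. First, the construction of Lemma~\ref{lem:compact-non-convex} is not continuous in its data: the endpoints of the minimal subsegment of $I$ meeting $K$ only at its ends (the ``first exit points'') can jump as $P$, $B$, $Q$, $T$ vary, and there is no continuous selection of $Q\in\partial B\cap K$, so a continuous family $(a_\lambda,b_\lambda)$ is not actually obtained; the appeal to Proposition~\ref{prop:locally-injective-properties} and Lemma~\ref{lem:properly-covered} for ``lifting and tracking sheets'' only works inside a fixed small neighborhood, while your pairs are global. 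Second, and more fundamentally, non-constancy of $\lambda\mapsto d(a_\lambda,b_\lambda)$ is asserted, not proved, and the dimension count cannot prove it: the parameter-to-pair map collapses dimensions (many choices of $P$, $B$, $T$ yield the same segment, hence the same pair), and nothing prevents the image of the whole family from lying in one level set of $d$. For symmetric maps --- e.g.\ the double cover $\mathbb{S}^1\to\mathbb{S}^1\subset\mathbb{R}^2$, where families of visual pairs can be produced by rotation --- a continuously varying family of visual $f$-neighbors can perfectly well have constant distance, so ``freedom of choices'' alone gives nothing.

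It is also telling that your argument never uses the hypothesis that $d$ is \emph{intrinsic}, which the paper's proof needs precisely at the hard step. The paper localizes instead of globalizing: near a value $P\in f(M)$ it uses Lemma~\ref{lem:properly-covered} to find a small ball in which all sheets $f(\overline{U'_i})$ coincide; then either that piece of $f(M)$ contains arbitrarily close visual neighbors (and since each sheet is an embedding of a compact set, the corresponding distances in $M$ tend to $0$, giving infinitely many values), or the local image is convex of dimension $n=\dim M$ by Lemma~\ref{lem:compact-non-convex}; in the latter case either $f(M)$ lies in the affine hull and Hopf finishes, or a pencil of balls tangent to the hull at an interior point $Q$ yields a point $Q'\in f(M)$ that is a visual neighbor of \emph{every} point of $f(M)$ near $Q$, and then subpaths of a shortest path from $f^{-1}(Q')$ to $f^{-1}(Q)$ realize a whole interval of distances --- this is where intrinsicness enters. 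To close your gap you would need a mechanism of this kind (one point seeing an open set of image points, converted into an interval of distances via the intrinsic metric), not a genericity or codimension argument.
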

	
%	\begin{remark} 
%		It can be shown via a minor refinement of the proof of Theorem~\ref{th:vis-is-infinite} that Theorem~\ref{th:vis-is-infinite} is true for any intrinsic metric on~$M$ compatible with its  topology.
%	\end{remark}	
	
	\begin{proof}
Let $n=\dim M$ be the dimension of~$M$.
		The case where $m \leq n$ is covered by the Hopf theorem. We study the case where $n < m$ and prove the theorem by case analysis.
		%examining all possible cases. 
		%We use $d$ for the metric on~$M$.
		
		\begin{enumerate}
			\item If $f$ is not locally injective, then the statement of the theorem follows by Proposition~\ref{prop:non-locally-injective} because $f$-neighbors are visual $f$-neighbors.
			 
			\item If $f$ is locally injective, let $P$ be any point in~$f(M)$. Since $f$ is locally injective and $M$ is compact, it follows that $f^{-1}(P)$ is finite (see Proposition~\ref{prop:locally-injective-properties}(1)). Suppose $f^{-1}(P)$ consists of~$k$ points and denote them by $x_1, \dots, x_k$. Due to the assumed local injectivity of $f$, each $x_i$ has an open neighborhood $U_i$ such that the restriction of~$f$ to $U_{i}$ is injective (cf.~the above proof of assertion~(2) of Proposition~\ref{prop:locally-injective-properties}). Taking (if necessary) smaller neighborhoods of $x_1$, $\dots$, $x_k$, we can assume that $U_1$, $\dots$, $U_k$ are pairwise disjoint. Let $U'_i$ be an open neighborhood of $x_i$ such that the closure~$\overline{U'_i}$ of~$U'_i$ is contained in~$U_i$. 
			%(for illustration one can imagine $\overline{U'_i}$ as a topological ball or a metric ball in $M$, but we will not need this in our proof formally). 
			Observe that the complement $\mathcal{C}:=M\setminus(U'_1\cup \dots \cup U'_k)$ is compact and~$P\notin f(\mathcal{C})$. Hence, since a continuous image of a compact set is compact, it follows that there is a neighborhood $W = W(P)$ of~$P$ in~$\mathbb{R}^m$ such that $f(\mathcal{C})\cap W(P) = \emptyset$.
%			$$(f(M\setminus(U'_1\cup \dots \cup U'_k)) \cap W(P) = \emptyset.$$ 
Thus the intersection of $W(P)$ with $f(M)$ is the union of intersections of $W(P)$ with compact sets $f(\overline{U'_1})$, \dots, $f(\overline{U'_k})$. 
			By Lemma~\ref{lem:properly-covered} there exists an open set $V \subset W(P)$ such that $V\cap f(M)$ is nonempty and all nonempty sets in the sequence $(V\cap f(\overline {U'_1})$, $\dots$, $V\cap f(\overline{U'_k}))$ coincide. Let $D'$ be a closed Euclidean ball in $\mathbb{R}^{m}$ such that $D' \subset V$ and $f(M)\cap\Int D' \neq \emptyset$. We have two subcases:
			%In what follows we will discuss two subcases:
			
			\begin{enumerate}
			\item If $f(M)$ has arbitrarily close visual neighbors (see Definition~\ref{def:visual neighbors}) lying in~$D'$, then, since due to the choice of $V$ all nonempty sets in the sequence $(D'\cap f(\overline{U'_1}), \dots, D'\cap f(\overline {U'_k}))$ coincide with $D'\cap f(M)$, we conclude that in each $U'_i$ with nonempty $D'\cap f(\overline{U'_i})$ there is an infinite sequence of pairs of visual $f$\nobreakdash-neighbors $(a_j, b_j)$ such that the distance between $f(a_j)$ and $f(b_j)$ tends to $0$ as $j$ tends to infinity. Since the restriction of $f$ to $\overline{U'_i}$ is an embedding and $\overline{U'_i}$ is compact, we conclude that $d(a_j,b_j) \to 0$ when $j\to\infty$ as well. This proves the statement for this subcase. 
				
				\item If there exists $\delta > 0$ such that $D'\cap f(M)$ contains no visual neighbors at distance less than $\delta$ from each other, then take a closed Euclidean ball $D'' \subset D'$ of diameter less than $\delta$ and such that $\Int D''$ intersects~$f(M)$. Then $D''\cap f(M)$ is convex by Lemma~\ref{lem:compact-non-convex}. Let $H$ be the affine hull of $D''\cap f(M)$ in $\mathbb{R}^{m}$, and let $\dim H$ be its dimension (or, which is the same, the dimension of $D''\cap f(M)$). Observe that $\dim H = n = \dim M$. Indeed notice that due to the choice of $V$ (which contains $D'$ and $D''$) all nonempty sets in the sequence $(D''\cap f(\overline{U'_1}), \dots, D''\cap f(\overline{U'_k}))$ coincide with $D''\cap f(M)$ while for each $i\in\{1,\dots,k\}$ the restriction $f|_{\overline{U'_i}}$ of~$f$ to $\overline{U'_i}$ is an embedding of a compact set to a Hausdorff space and hence $f|_{\overline{U'_i}}$ is a homeomorphism between $\overline{U'_i}$ and $f(\overline{U'_i})$. Thus for an arbitrary point $x$ in the relative interior of $D''\cap f(M)$ the map $f$ gives a homeomorphism between some neighborhood of any point in $f^{-1}(x)$ and a neighborhood of $x$ in $D''\cap f(M)$. Then the desired equality of dimensions $\dim H = n$ follows by the domain invariance theorem. Now the proof splits into two subsubcases: 
				
				\begin{enumerate}
					\item If $H$ contains $f(M)$ then the statement readily follows from the Hopf theorem. (Supplemented by the fact that, due to compactness, a sequence of pairs of points arbitrarily close to each other with respect to the standard metric preserves this property when passing to any other metric compatible with the topology.)
					
					\item If $H$ does not contain $f(M)$, let $T$ be a point in $f(M) \setminus H$, let $H_+$ be the half-space of dimension $n+1$ bounded by $H$ and containing~$T$, and let $Q$ be an arbitrary point in the relative interior of $D''\cap f(M)$. Next let $\mathcal{B}$ be the family of closed $m$-dimensional Euclidean balls centered at points of $H_+$ and touching $H$ at $Q$. It is clear that balls in $\mathcal{B}$ with sufficiently small radii lie in $D''$ and intersect $f(M)$ just at $Q$, while balls in $\mathcal{B}$ with sufficiently large radii contain~$T$.
By compactness arguments and since a part of $f(M)$ that is close to $Q$ is contained in $H$, we conclude that there is a ball $B$ in $\mathcal{B}$ such that $f(M) \cap \Int B = \emptyset$ and $(\partial B \cap f(M)) \setminus \{Q\} \neq \emptyset$. Let $Q'$ be a point in $(\partial B \cap f(M)) \setminus\{Q\}$. It is easily seen by construction that all points of $f(M)$ in a small neighborhood of $Q$ are visual neighbors of~$Q'$. Now by virtue of the above-mentioned homeomorphisms between neighborhoods of points in $f^{-1}(Q)$ and a neighborhood of $Q$ in $D''\cap f(M)$ it follows that all points in some sufficiently small neighborhood of $f^{-1}(Q)$ are visual $f$-neighbors for all points in $f^{-1}(Q')$. 
Let $q$ be a point in $f^{-1}(Q)$, let $q'$ be a point in $f^{-1}(Q')$, and let $\rho$ be a shortest path joining $q$ and $q'$ (this~$\rho$ exists due to compactness of~$M$).
Then subpaths of $\rho$ are also shortest paths, which shows that in this subsubcase $\Omega_f^{vis}$ is continual for intrinsic metrics.
The theorem is proved. \qedhere
					
				\end{enumerate}
				
			\end{enumerate}

		\end{enumerate}

	\end{proof}

	To prove the result about spherical $f$-neighbors for the case of continuous maps $\mathbb{S}^1 \to \mathbb{R}^2$, we need several additional constructions and statements.
	
	Let $\mathbb{R}^2$ be the Euclidean plane. Suppose $\mathcal{O} \subset \mathbb{R}^2$ is a nonempty bounded open connected set.
	
	\begin{definition}
	\label{def:good-disk}
		A closed Euclidean disk $B(x) \subset \overline{\mathcal{O}}$ centered at $x \in \mathcal{O}$ is called a \emph{good disk} if $\card(B(x) \cap \partial{\mathcal{O}}) \geq 2$ (see Fig.~4). Points of $B(x) \cap \:\partial{\mathcal{O}}$ are called \emph{good points}. The union of good points corresponding to all good disks in $\overline{\mathcal{O}}$ we denote by $\mathcal{G}_{\mathcal{O}}$. Points of $\partial{\mathcal{O}} \setminus \mathcal{G}_{\mathcal{O}}$ are called \emph{bad points}.   
	\end{definition}
	
	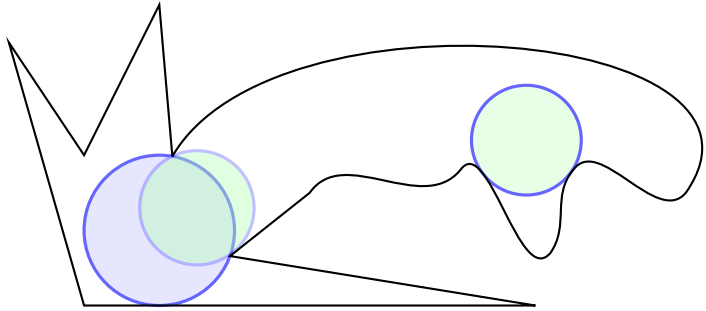
\begin{figure}[h]
		\centering
		\begin{tikzpicture}[>=latex]
			
			\filldraw[color=blue!60, fill=blue!10, very thick] (-6,0) circle (1);
			
			\filldraw[color=blue!60, fill=green!30, opacity = 0.4, very thick] (-5.5,0.3) circle (0.76);

			\filldraw[color=blue!60, fill=green!10, very thick] (-1.12,1.2) circle (0.73);
			
			\draw[thick, black, scale=1] ({-6 + 1*cos(80)},{0 + 1*sin(80)}) to [out=60, in=55] (1,0.5) to [out=230, in=55] (-0.5,0.8) to [out=230, in=55] (-0.8,-0.3) to [out=230, in=55] (-2,0.8) to [out=230, in=55] (-4,0.5) to ({-6 + 1*cos(-20)},{0 + 1*sin(-20)}) to (-1, -1) to (-7, -1) to (-8, 2.5) to (-7, 1) to (-6, 3) to ({-6 + 1*cos(80)},{0 + 1*sin(80)});
			
		\end{tikzpicture}
		\caption{An example of good disks in $\overline{\mathcal{O}}$. Notice that a good point can belong to uncountably many good disks.}
	\end{figure}
	
	For example, an ellipse in $\mathbb{R}^2$ has two bad points and a square has four bad points. For a ‘triangle’ whose sides are the Koch curve, bad points are everywhere dense in $\partial{\mathcal{O}}$. The following lemma shows that good points are always everywhere dense in $\partial{\mathcal{O}}$.
	
	\begin{lemma}
	\label{lem:good-points-dense}
		$\overline{{\mathcal{G}}_{\mathcal{O}}} = \partial{\mathcal{O}}$.
	\end{lemma}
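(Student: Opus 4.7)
The plan is to establish the nontrivial inclusion $\partial\mathcal{O}\subset\overline{\mathcal{G}_{\mathcal{O}}}$ (the reverse being immediate because $\mathcal{G}_{\mathcal{O}}\subset\partial\mathcal{O}$ and $\partial\mathcal{O}$ is closed). Fix $p\in\partial\mathcal{O}$ and write $\rho(x):=\operatorname{dist}(x,\partial\mathcal{O})$. The main device will be, for each $y\in\mathcal{O}$, the quantity
\[
M(y)\;=\;\sup\bigl\{\rho(x):x\in\mathcal{O},\ |x-y|\le\rho(x)\bigr\},
\]
which is the radius of the largest closed disk contained in $\overline{\mathcal{O}}$ and containing~$y$. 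By continuity of $\rho$ and compactness of $\overline{\mathcal{O}}$ the supremum is attained at some $x_{y}\in\mathcal{O}$, giving a closed disk $D_{y}:=\overline{B_{\rho(x_{y})}(x_{y})}\subset\overline{\mathcal{O}}$ with $y\in D_{y}$.

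The technical heart of the proof, and the step I expect to be the main obstacle, is showing that $D_{y}$ is always a good disk in the sense of Definition~\ref{def:good-disk}, i.e., $|D_{y}\cap\partial\mathcal{O}|\ge 2$. I would argue by contradiction: suppose this intersection consists of a single point $q$. Then by the classical fact that the distance function to a closed set is differentiable precisely where the nearest point is unique, $\rho$ is differentiable at $x_{y}$ with $\nabla\rho(x_{y})=(x_{y}-q)/\rho(x_{y})$, a unit vector. If the constraint $|x_{y}-y|\le\rho(x_{y})$ defining $M(y)$ is strict at $x_{y}$, then $x_{y}$ is an unconstrained local maximum of $\rho$, contradicting $|\nabla\rho(x_{y})|=1$. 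If it is active, a first-order analysis of the perturbation $x_{y}\mapsto x_{y}+tv$ shows that feasibility requires $(q-y)\cdot v\le 0$, while $\rho$ strictly increases when $(x_{y}-q)\cdot v>0$; the corresponding open half-spaces in $\mathbb{R}^{2}$ have empty intersection only if $q-y$ is a positive multiple of $x_{y}-q$, a configuration which together with the active constraint $|x_{y}-y|=|x_{y}-q|$ forces $y=q$, contradicting $y\in\mathcal{O}$. Hence an admissible direction $v$ exists, and a small perturbation contradicts the maximality of $M(y)$.

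To conclude, take $y_{n}\in\mathcal{O}$ with $y_{n}\to p$ and pass to a subsequence along which $x_{n}:=x_{y_{n}}\to x_{*}$, $\rho(x_{n})\to r_{*}$, and $D_{n}\to D_{*}:=\overline{B_{r_{*}}(x_{*})}$ in Hausdorff distance; since $y_{n}\in D_{n}$, the limit $p$ lies in $D_{*}$. If $r_{*}=0$, then $x_{*}=p$ and any touching points $q_{n}\in D_{n}\cap\partial\mathcal{O}$ satisfy $|q_{n}-p|\le\rho(x_{n})+|x_{n}-p|\to 0$, exhibiting good points converging to $p$. If $r_{*}>0$, then $\Int D_{*}\subset\mathcal{O}$ forces $p\in\partial D_{*}\cap\partial\mathcal{O}$; choosing two distinct touching points $q_{n},q'_{n}\in D_{n}\cap\partial\mathcal{O}$ and extracting limits $q_{*},q'_{*}\in D_{*}\cap\partial\mathcal{O}$, either the set $\{p,q_{*},q'_{*}\}$ contains at least two distinct points, in which case $D_{*}$ is a good disk and $p$ is itself a good point, or $q_{*}=q'_{*}=p$, in which case the sequence $q_{n}$ already consists of good points converging to $p$. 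In every case $p\in\overline{\mathcal{G}_{\mathcal{O}}}$.
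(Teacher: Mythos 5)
Your proof is correct, and it takes a more self-contained route than the paper's. The paper imports from the medial-axis literature (\cite{L04,M88,E29}) the fact that $\mathcal{O}$ is covered by interiors of good disks, attaches a good disk $B_k$ to each point of a sequence $x_k\to b$ approaching a putative bad point $b$, passes to a Hausdorff limit $B_\infty$, and concludes that the chosen good points on the $B_k$ can accumulate only at~$b$. You replace the citation by an explicit variational construction: for $y\in\mathcal{O}$ you maximize $\rho$ over the feasible set $\{x:\,|x-y|\le\rho(x)\}$ and prove directly that the maximizing disk is good, using the classical fact that the distance to a closed set is differentiable off the set exactly where the nearest point is unique (so a one-point intersection with $\partial\mathcal{O}$ yields a unit gradient, incompatible with maximality after your first-order perturbation); your concluding limit argument is then structurally the same as the paper's, except that you argue directly rather than by contradiction with badness, which produces the extra subcase in which the limit disk $D_*$ is itself good and $p$ is a good point outright. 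What your route buys is independence from the cited references; what it costs is the analytic input on distance functions and a bit more care at a few glossed steps, all easily supplied: the supremum defining $M(y)$ is attained because the feasible set is compact and a feasible point on $\partial\mathcal{O}$ would force $x=y\in\mathcal{O}$; the perturbation step needs the strict inequality $(q-y)\cdot v<0$ to absorb the $o(t)$ errors, which your open half-space formulation already provides; and in the case $r_*>0$ one should note that $\rho(x_*)=r_*>0$ places the limit center $x_*$ in $\mathcal{O}$, so $D_*$ is indeed a good disk in the sense of Definition~\ref{def:good-disk}.
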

	
	\begin{proof}
	 It suffices to show that if $b$ is a bad point in $\partial{\mathcal{O}}$ then 
	 a sequence of good points converges to~$b$.
	 %$b$ is an accumulation point for the set of good points in $\partial{\mathcal{O}}$.
		%Let $b$ be a bad point in $\partial{\mathcal{O}}$. 
		Take a sequence $(x_k)_{k=1}^{\infty}$ in $\mathcal{O}$ converging to~$b$. Note that, since $\mathcal{O}$ is the union of interiors of all good disks in $\overline{\mathcal{O}}$ (see~\cite{L04,M88,E29}), each $x_k$ lies in some good disk. Let $B_k$ be a good disk containing~$x_k$. Passing to a subsequence if necessary, we may assume that the sequence $(B_k)_{k=1}^{\infty}$ converges in the ‘disk space.’ Let $B_{\infty}$ be the limit of $(B_k)_{k=1}^{\infty}$. If $B_{\infty} = \{b\}$, then the statement of the lemma easily follows. Otherwise $B_{\infty}$ is nondegenerate. It is clear that $B_{\infty} \subset \overline{\mathcal{O}}$, because $B_k$ is in $\overline{\mathcal{O}}$ for each $k \in \mathbb{N}$. Next observe that $B_{\infty} \cap \overline{\mathcal{O}} = b$, since otherwise $b$ is not a bad point. Now for each good disk $B_k$, choose a good point $\xi_k\in B_k$. Any accumulation point of $\{\xi_k\}_{k=1}^{\infty}$ is a point in $\overline{\mathcal{O}}$ and a point in $\partial{B_{\infty}}$, i.\,e. it is the point $b$. This completes the proof. %\qedhere
	\end{proof}
		
		\begin{proposition}
	\label{prop:simply-connected}
		\begin{enumerate}
		\item Any open connected subset of~$\mathbb{R}^2$ is path-connected.
		\item If a subset~$K$ of $\mathbb{R}^2$ is closed and connected then each bounded component of the complement $\mathbb{R}^2\setminus K$ is simply connected.
		\item If $O$ is an open, connected, and simply connected subset of $\mathbb{R}^2$ and $H$ is a nondegenerate segment in $\mathbb{R}^2$ with relative interior in $O$ and with endpoints in $\partial{O}$, then the set $O\setminus H$ has precisely two connected components.
		\end{enumerate}
	\end{proposition}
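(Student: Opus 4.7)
The plan is to attack the three parts separately, drawing on standard tools: local path-connectedness of $\mathbb{R}^2$ for (1), Alexander duality in~$S^2$ for (2), and a Mayer--Vietoris computation with an adapted tubular neighborhood for (3). For \emph{part (1)}, since plane disks are path-connected, $\mathbb{R}^2$ is locally path-connected; I would fix $p\in O$, let $A\subseteq O$ be the path-component of~$p$, and observe that around each $q\in A$ any open disk $B\subset O$ lies entirely in $A$ (via straight segments), so $A$ is open; by the same argument $O\setminus A$ is open, and connectedness of~$O$ forces $A=O$.

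For \emph{part (2)}, let $U$ be a bounded component of $\mathbb{R}^2\setminus K$. I would first show $\mathbb{R}^2\setminus U$ is connected, then invoke Alexander duality $\tilde H_1(U)\cong\tilde H^0(S^2\setminus U)$ in $S^2$ to obtain $H_1(U)=0$, which for an open subset of~$\mathbb{R}^2$ (whose fundamental group is free, as a non-compact surface) amounts to simple connectedness. For connectedness of $\mathbb{R}^2\setminus U$: any component $V\neq U$ of $\mathbb{R}^2\setminus K$ has $\partial V\subseteq K$, so $\overline V\subseteq V\cup K$ and hence $K\cup V=K\cup\overline V$ is the union of two connected sets meeting in the nonempty $\partial V$, thus connected. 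Writing $\mathbb{R}^2\setminus U=\bigcup_{V\neq U}(K\cup V)$ displays it as a union of connected sets all containing the connected set~$K$, so it is connected.

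For \emph{part (3)}, let $H^\circ$ be the relative interior of~$H$; since the endpoints of~$H$ lie in $\partial O$, $O\setminus H=O\setminus H^\circ$, and $H^\circ$ is closed in~$O$. Placing $H$ on the $x$-axis with $a=(0,0)$ and $b=(1,0)$, I would take
\[
N:=\Bigl\{(x,y):\,0<x<1,\;|y|<\tfrac12\operatorname{dist}\bigl((x,0),\partial O\bigr)\Bigr\},
\]
which lies in $O$ because each of its points is closer to the corresponding point of $H^\circ$ than that point is to~$\partial O$; rescaling the $y$-coordinate by the width function gives $N\cong(0,1)\times(-1,1)$ with $H^\circ$ corresponding to $(0,1)\times\{0\}$, so $N\setminus H^\circ$ has exactly two components. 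Reduced Mayer--Vietoris for the open cover $O=(O\setminus H^\circ)\cup N$ with intersection $N\setminus H^\circ$, combined with $\tilde H_1(O)=0$, $\tilde H_0(N)=\tilde H_0(O)=0$, and $\tilde H_0(N\setminus H^\circ)=\mathbb{Z}$, yields $\tilde H_0(O\setminus H^\circ)\cong\mathbb{Z}$, i.e.\ $O\setminus H$ has precisely two components. The delicate step (the main obstacle I anticipate) is the construction of $N$: since $\operatorname{dist}((x,0),\partial O)\to 0$ near the endpoints of~$H$, no tubular neighborhood of constant width works, and one must verify that the above position-dependent width function simultaneously keeps $N$ inside~$O$ and yields a global product structure. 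The rest of the argument is then formal.
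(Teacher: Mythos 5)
Your proposal is correct, but in parts (2) and (3) it takes a genuinely different route from the paper. Part (1) is the same local path-connectedness argument. For (2), the paper argues elementarily: a noncontractible loop in a bounded component is replaced (after passing to general position) by a simple polygonal loop, and the Jordan--Schoenflies theorem forces the disk it bounds to contain the connected set $K\supseteq\partial O$, which then makes the component unbounded --- a contradiction. You instead invoke Alexander duality $\tilde H_1(U)\cong\check H^{0}(S^2\setminus U)$ together with the theorem that an open surface has free fundamental group; this is valid, but note two points you should make explicit: the duality must be taken with the \v{C}ech form of $H^0$ (the compact set $S^2\setminus U$ may be wild, and connectedness is exactly what kills reduced \v{C}ech $H^0$), and you pass from connectedness of $\mathbb{R}^2\setminus U$ to that of $S^2\setminus U$ by adjoining the point at infinity, which works because $U$ is bounded so $\infty$ adheres to the unbounded set $\mathbb{R}^2\setminus U$. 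For (3), the paper proves disconnectedness by a crossing-parity argument (a simple closed polygonal curve in $O$ meeting $H$ transversely in a single point would separate the endpoints of $H$, contradicting simple connectedness since both endpoints lie in $\partial O$) and proves ``at most two'' via a neighborhood of $H^\circ$ that is a union of disks centered on $H$; your variable-width set $N$ is essentially that same neighborhood, but you extract both bounds at once from the reduced Mayer--Vietoris sequence and $H_1(O)=0$, which is clean and avoids the polygonal-approximation step. The trade-off is that your proof is shorter but leans on heavier machinery (\v{C}ech--Alexander duality, freeness of $\pi_1$ of open surfaces, Hurewicz, Mayer--Vietoris), whereas the paper stays within elementary planar topology (Jordan--Schoenflies and general position), which is more self-contained in its context.
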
	
	
	\begin{proof}
	\begin{enumerate}
		\item This readily follows from the fact that~$\mathbb{R}^m$ is locally path-connected (i.\,e., it has a basis of path-connected neighbourhoods). Each open subset of a locally path-connected space is also locally path-connected. Each path-connected component of a locally path-connected space is open. Consequently, connected components of a locally path-connected space are the same as its path-connected components. 
			\item 
			Assume to the contrary that a bounded component~$O$ of $\mathbb{R}^2\setminus K$ is not simply connected. 
			Then there exists a loop $\gamma\colon [0,1]\to O$ noncontractible in~$O$.
			Since $\gamma([0,1])$ is compact and $O$ is open, it follows that $\gamma$ is homotopic in~$O$ to a polygonal loop in general position. 
			Then we see that there exists a simple noncontractible loop $\gamma_s\colon [0,1]\to O$. 
			This means that the region~$B$ in~$\mathbb{R}^2$ bounded by~$\gamma_s$ (here, we use the Jordan--Schoenflies theorem) is not contained in~$O$.
			Therefore, since $\partial O$ is contained in~$K$ and $K$ is connected, it follows that $B$ contains~$K$. This contradicts the assumption that $O$ is bounded.
			
		    \item We first show that $O\setminus H$ is disconnected. 
		    Assume to the contrary that $O\setminus H$ is connected. 
		    Let $I$ be a ‘small’ segment lying in $O$ and intersecting~$H$ transversely. 
		    Since $O\setminus H$ is open and assumed to be connected, it follows that $O\setminus H$ is also path-connected (see the first assertion of the proposition).
		    Therefore, there exists a path $\rho$ lying in $O\setminus H$ and joining the endpoints of~$I$.
			Then, passing to isotopic polygonal lines as in the proof of the previous assertion, it is easy to deduce that there exists a simple closed polygonal line~$\gamma$ lying in $O$ and intersecting~$H$ transversely at a single point ($H\cap I$). This implies that the endpoints of~$H$ lie in distinct components of the set $\mathbb{R}^2\setminus \gamma$ (use the Jordan--Schoenflies theorem), which contradicts the fact that~$O$ is simply connected since these endpoints are both in~$\partial{O}$. This contradiction shows that $O\setminus H$ is disconnected. 
		
		To verify that $O\setminus H$ has at most two components, we construct an open neighborhood $U\subset O$ of the relative interior of~$H$ such that $U$ is the union of open Euclidean disks, each centered at a point of~$H$. Then $U\setminus H$ is clearly a two-component set while each component of $O\setminus H$ intersects (and hence contains) a component of $U\setminus H$.		\qedhere
		\end{enumerate}
	\end{proof}
	
	\begin{definition}
	\label{def:ruled}
		We call a nondegenerate segment in $\overline{\mathcal{O}}$, with relative interior in $\mathcal{O}$ and with endpoints in $\partial{\mathcal{O}}$, a \emph{chord}. 
		We say that a chord is \emph{good} if it is contained in a good disk.		
		A sequence of chords $(H_k)_{k=1}^{\infty}$ in $\overline{\mathcal{O}}$ is called \emph{ruled} if for each $k > 1$, the chord $H_k$ divides $\mathcal{O}$ in two parts such that one of these parts contains the relative interior of $H_{k-1}$ and another one contains the relative interior of $H_{k+1}$.
	\end{definition}
	
\begin{remark}
Definitions readily imply that two points in $\partial{\mathcal{O}}$ are spherical neighbors with respect to $\partial{\mathcal{O}}$ if and only if these two points are the endpoints of a good chord.
\end{remark}

	\begin{lemma}
	\label{lem:ruled}
		If $\mathcal{O}$ is simply connected then there is a ruled sequence of good chords in $\overline{\mathcal{O}}$.
	\end{lemma}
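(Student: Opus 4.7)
The plan is to construct the ruled sequence $(H_k)$ inductively; the main work lies in establishing the following \emph{key claim}: for any good chord $H$ of $\mathcal{O}$ and either of the two components $O'$ of $\mathcal{O} \setminus H$ (there are exactly two by Proposition~\ref{prop:simply-connected}(3)), there is a good chord of $\mathcal{O}$ whose relative interior lies in $O'$.

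Granted the key claim, the construction is quick. A first good chord $H_1$ exists because the largest closed disk inscribed in $\overline{\mathcal{O}}$ must touch $\partial \mathcal{O}$ at two or more points (otherwise one could enlarge it), so it is a good disk, and the segment joining any two of its contact points is a good chord. Next, I would choose $H_2$ to be a good chord with relative interior in one of the components of $\mathcal{O} \setminus H_1$. Inductively, given $H_{k-1}$ and $H_k$, let $O_k'$ be the component of $\mathcal{O} \setminus H_k$ not containing the relative interior of $H_{k-1}$, and apply the key claim to produce $H_{k+1}$ with interior in $O_k'$. Ruledness is then immediate, and the sequence is infinite by construction.

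For the key claim, set $\Gamma := \partial O' \cap \partial \mathcal{O}$, so $\partial O' = H \cup \Gamma$. One first verifies that $O'$ is bounded, open, connected, and in fact simply connected (a van Kampen argument applied to $\mathcal{O} = \overline{O'} \cup \overline{O''}$, with the intersection containing the contractible segment $H$). Applying Lemma~\ref{lem:good-points-dense} to $O'$ as a bounded open connected set, its good points are dense in $\partial O'$. A crucial topological observation is that any closed disk $B \subset \overline{O'}$ centered in $O'$ meets $H$ in at most one tangent point, for otherwise $H$ would be a secant of $\Int B$ and one half of the open disk would necessarily lie in the other component of $\mathcal{O}\setminus H$, contradicting $B\subset\overline{O'}$. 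Hence every good disk of $\overline{O'}$ has at least one contact on $\Gamma$; if some such disk has two or more contacts on $\Gamma$, then it is simultaneously a good disk of $\mathcal{O}$, and the segment joining two such $\Gamma$-contacts is a good chord of $\mathcal{O}$ whose relative interior lies in $\Int B$, which in turn lies in $O'$ (using that $\Int B$ is open, connected, and cannot cross $H$).

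The delicate part is the extremal situation in which every good disk of $\overline{O'}$ has exactly one contact on $H$ and one on $\Gamma$ (as happens, for example, when $\mathcal{O}$ is a round disk and $H$ is a diameter). In this case, no good disk confined to $\overline{O'}$ suffices; the required good chord is contained in a good disk of $\mathcal{O}$ that straddles $H$. I would produce such a disk by a continuous deformation: starting from a good disk $B^\star$ of $\mathcal{O}$ (for example, the largest disk inscribed in $\overline{\mathcal{O}}$) and sliding its center toward $\Gamma$ while keeping it maximally inscribed in $\overline{\mathcal{O}}$, until at least two contact points land on the $O'$-side arc of $\partial B^\star$; the segment between these two $\Gamma$-contacts then lies in the convex cap $B^\star \cap \overline{O'}$ and its interior lies in $O'$. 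The main obstacle is making this sliding rigorous, in particular guaranteeing that two $\Gamma$-contacts appear simultaneously on the same arc of the moving disk; for this, density of good points on $\partial \mathcal{O}$ (Lemma~\ref{lem:good-points-dense} applied to $\mathcal{O}$ itself) has to be combined with a compactness argument along the sliding trajectory, and this is where I expect the proof to be most technical.
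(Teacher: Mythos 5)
Your overall architecture is the paper's own: reduce the lemma to the key claim that each of the two components of $\mathcal{O}\setminus H$ (Proposition~\ref{prop:simply-connected}(3)) contains the relative interior of a good chord, and then build the ruled sequence inductively; your handling of the non-extremal case (a good disk of $\overline{O'}$ with two contact points on $\Gamma=\partial O'\cap\partial\mathcal{O}$, which is then a good disk of $\mathcal{O}$ whose chord lies in $O'$) is essentially correct. The genuine gap is precisely the ``extremal'' case you flag yourself: when every good disk of $\overline{O'}$ touches $\Gamma$ only once, you offer only a sliding heuristic and concede it is not rigorous. As described, it cannot be carried out: ``sliding the center toward $\Gamma$ while keeping the disk maximally inscribed'' is not a well-defined deformation (maximal inscribed disks are parametrized by the medial axis, which is in general a branching set, and there is no canonical direction ``toward $\Gamma$''), and even along a continuous family of inscribed disks nothing in your sketch forces two contact points to appear simultaneously on the $O'$-side of $H$, nor does a compactness argument along the trajectory guarantee that the limiting disk still has two distinct contacts (they may collapse) or that the resulting chord has relative interior in $O'$. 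Since the extremal case genuinely occurs (your half-disk example), the key claim, and hence the lemma, is left unproved.

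For comparison, the paper closes this case with no deformation at all. Fix the good chord $H$ and a component $R$ of $\mathcal{O}\setminus H$, and among all good disks containing $H$ (each such disk has the endpoints of $H$ on its boundary circle, so their caps on the $R$-side are nested) take the one, $B$, with maximal intersection with $R$. If $R\subset B$, then the part of $\partial\mathcal{O}$ bounding $R$ is an arc of $\partial B$, which already yields a continuum of good chords with relative interior in $R$. If $R\not\subset B$, pick $x\in R\setminus B$; by the medial-axis fact that $\mathcal{O}$ is the union of the interiors of its good disks (\cite{L04,M88,E29}), $x$ lies in some good disk $D$, and good chords of $D$ have relative interior in $R$. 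This dichotomy covers uniformly what you split into the non-extremal and extremal cases; replacing your sliding argument by it (or by some other rigorous treatment of the extremal case) would complete your proof.
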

	
	\begin{proof}
Observe that $\overline{\mathcal{O}}$ has at least one good chord because $\overline{\mathcal{O}}$ contains at least one good disk (\cite{L04,M88,E29}) while the segment joining two good points on the boundary of this good disk is a good chord. Next observe that in order to prove the lemma it suffices to show that if $H$ is a good chord then the set $\mathcal{O} \setminus H$ is composed of two connected components and each of these components contains the relative interior of a good chord (of~$\overline{\mathcal{O}}$). The set $\mathcal{O} \setminus H$ is composed of two connected components because $\mathcal{O}$ is simply connected (see Proposition~\ref{prop:simply-connected}). Now, if $R$ is a component of $\mathcal{O} \setminus H$, let $B$ be that of all good disks containing~$H$ which has maximal intersection with~$R$. 	If~$R$ is contained in~$B$ then the part of $\partial \mathcal{O}$ that bounds~$R$ is an arc of the circle~$\partial B$, and hence $R$ contains relative interiors of a continuum of good chords. If~$R$ is not contained in~$B$, let $x$ be a point in~$R\setminus B$. Then there exists a good disk~$D$ containing~$x$ (see~\cite{L04,M88,E29}). Clearly, the relative interior of any good chord contained in~$D$, is contained in~$R$. The lemma is thus proved.
	\end{proof}
	
	\begin{definition}
	\label{def:locally-simple}
	Let $\mathbb{S}^1$ be a circle, and let $f\colon\mathbb{S}^1 \to \mathbb{R}^2$ be a continuous map. If $f$ is locally injective (see Definition~\ref{def:locally-injective}) we say that $f$ is a \emph{locally-simple curve}. If the restriction of $f$ to an arc $\alpha\subset\mathbb{S}^1$ is injective, we say that $\alpha$ is an \emph{$f$-simple arc}.
	\end{definition}

%	\begin{convention}
%	\label{convention:left}
%	\end{convention}

	The following proposition lists several properties of locally-simple curves.
	% and prove new ones. % ((3) and (4)).
	
	\begin{proposition}
	\label{prop:locally-simple-properties}
		If $f\colon\mathbb{S}^1 \to \mathbb{R}^2$ is a locally-simple curve, then 
		\begin{enumerate}
			\item For any $y\in f(\mathbb{S}^1)$, the inverse image $f^{-1}(y)$ is finite;  
			\item Each point $y\in f(\mathbb{S}^1)$ has a neighborhood $U=U(y)$ such that each connected component of $f^{-1}(U)$ is an $f$-simple arc;
			\item $\Int f(\mathbb{S}^1) = \emptyset$;
			\item The set of bounded connected components of\:~$\mathbb{R}^2 \setminus f(\mathbb{S}^1)$ is nonempty;
			\item The set of indices (the winding numbers) of connected components of\:~$\mathbb{R}^2 \setminus f(\mathbb{S}^1)$ is finite. 
		\end{enumerate}
	\end{proposition}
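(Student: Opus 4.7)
Parts~(1) and~(2) are direct consequences of Proposition~\ref{prop:locally-injective-properties} applied with $X=\mathbb{S}^1$: (1) is its first assertion, and (2) follows from the second, since each connected component of $f^{-1}(U)$ is contained in one of the disjoint open sets where $f$ is injective and is therefore an $f$-simple arc. Shrinking $U$ further if necessary, one can even arrange that $f^{-1}(U)$ has exactly $|f^{-1}(y)|$ components, one around each preimage of~$y$.

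Part~(3) follows from (1) and (2): near each $y\in f(\mathbb{S}^1)$, a sufficiently small open neighborhood $U\subset\mathbb{R}^2$ meets $f(\mathbb{S}^1)$ in a finite union of Jordan arcs (the restriction of~$f$ to each closed $f$-simple arc above~$U$ is an embedding by compactness). By the invariance-of-domain theorem, a Jordan arc in~$\mathbb{R}^2$ has empty interior, and a finite union of nowhere-dense sets is nowhere dense; hence $\Int f(\mathbb{S}^1)=\emptyset$.

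For part~(4), I argue by contradiction. Suppose every component of $\mathbb{R}^2\setminus f(\mathbb{S}^1)$ is unbounded; then, since $f(\mathbb{S}^1)$ is compact, $\mathbb{R}^2\setminus f(\mathbb{S}^1)$ is connected. Consequently $f(\mathbb{S}^1)$ contains no simple closed curve, for by the Jordan curve theorem such a curve would enclose a bounded region that would persist as a bounded component of~$\mathbb{R}^2\setminus f(\mathbb{S}^1)$. As a continuous image of the compact, locally connected space~$\mathbb{S}^1$, the set $f(\mathbb{S}^1)$ is a Peano continuum; a Peano continuum containing no simple closed curve is a dendrite. Every non-singleton dendrite has an endpoint~$v$ of order~$1$, but (2) supplies a neighborhood of any $\theta_0\in f^{-1}(v)$ on which $f$ is an embedding, producing a Jordan arc through~$v$ in $f(\mathbb{S}^1)$ and forcing $v$ to have order at least~$2$---a contradiction. (The degenerate case in which $f(\mathbb{S}^1)$ is a single point is ruled out directly by local injectivity.)

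Part~(5) will be the main obstacle. My plan is to cover $\mathbb{S}^1$ by finitely many closed $f$-simple arcs $\alpha_1,\dots,\alpha_n$ (via~(2) and compactness) so that $f(\mathbb{S}^1)$ is the union of finitely many Jordan arcs $\beta_i:=f(\alpha_i)$. For every $p\notin f(\mathbb{S}^1)$ one has the decomposition
\[
2\pi\,\wind(f,p)\;=\;\sum_{i=1}^{n}\Delta_{\alpha_i}\!\arg\bigl(f(\cdot)-p\bigr),
\]
so it suffices to bound $|\wind(f,p)|$ uniformly in~$p$. I intend to do this by approximating $f$ by a piecewise-linear locally-injective loop $g$ whose image is a finite graph in~$\mathbb{R}^2$; for such~$g$, Euler's formula gives finitely many complementary faces, hence a finite set of winding numbers. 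The crux is to verify that for every $p\notin f(\mathbb{S}^1)$ one has $\wind(f,p)=\wind(g,p)$ whenever $g$ is chosen sufficiently close to~$f$ (uniformly so that the segment from $p$ to infinity can be taken to avoid both images after a small deformation), so that the winding spectrum of~$f$ embeds into that of~$g$ and is therefore finite.
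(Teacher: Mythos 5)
Parts (1)--(3) of your proposal follow the same route as the paper (which derives (1) and (2) from Proposition~\ref{prop:locally-injective-properties} and gets (3) from invariance of domain). For (4) you take a genuinely different path: the paper simply cites Whyburn's theorem on locally simple curves, while you argue that $f(\mathbb{S}^1)$ is a Peano continuum which, having no bounded complementary component, contains no simple closed curve (note this uses (3) to guarantee the Jordan region is not swallowed by the image), hence is a dendrite, and an endpoint of the dendrite contradicts the local embeddings from (2). That is a correct, self-contained alternative modulo standard continuum theory.

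The genuine gap is in (5), exactly at the step you yourself call ``the crux.'' For a single PL approximant $g$ with $\|f-g\|_\infty<\varepsilon$, the equality $\wind(f,p)=\wind(g,p)$ is guaranteed only for points $p$ whose distance to $f(\mathbb{S}^1)$ exceeds $\varepsilon$ (so that the straight-line homotopy from $f$ to $g$ misses $p$); for points closer to the image the two indices can genuinely differ, and those are precisely the problematic points, since $\mathbb{R}^2\setminus f(\mathbb{S}^1)$ may have infinitely many components accumulating on the image at arbitrarily small distance. If instead you let $g$ depend on $p$, the conclusion evaporates: the bound on the winding spectrum of $g$ comes from the number of faces of its PL graph, which is not uniform as the approximation is refined, so no uniform bound on $|\wind(f,p)|$ results. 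Your preliminary reduction does not help either: for a fixed simple arc $\beta_i$ the quantity $\sup_{p}\bigl|\Delta_{\alpha_i}\arg\bigl(f(\cdot)-p\bigr)\bigr|$ can be infinite (a simple arc may spiral infinitely often around one of its endpoints), so decomposing $\wind(f,p)$ into argument increments over finitely many $f$-simple arcs gives no a priori control near the image. The paper resolves exactly this local difficulty by a surgery performed only near a point $z\in f(\mathbb{S}^1)$: it replaces the images of the finitely many closed $f$-simple arcs $\gamma_1,\dots,\gamma_k$ meeting $f^{-1}(z)$ by arcs of the small circle $\partial D_{\varepsilon_0}(z)$, obtaining a curve $f'$ that misses a smaller disk around $z$; each replacement changes the index of any point by at most $1$, so within that smaller disk at most $k+1$ indices occur for $f$, and a compactness argument (all nonzero-index components lie in a fixed compact set) finishes the proof. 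Some local comparison of this kind, against a curve modified only near $z$ rather than a global approximation, is what your outline is missing.
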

	
	\begin{proof}
		%We observe that 
		\begin{enumerate}
			\item This is a particular case of assertion~(1) of Proposition~\ref{prop:locally-injective-properties}.
			% case (1) in the above proof of Theorem~\ref{th:vis-is-infinite}. 
			%Indeed if $f^{-1}(b)$ is infinite take a converging sequence of points in $f^{-1}(b)$. The limit point of this sequence is not contained in any $f$-simple arc.
			
			\item This follows from assertion~(2) of Proposition~\ref{prop:locally-injective-properties}. (If $f$ is injective and we have $f^{-1}(U)=\mathbb{S}^1$, we pass to a smaller neighborhood.)
			
			\item This follows, for example, from the domain invariance theorem.
			
			%from the well-known result that there is not a homeomorphism between $f$-simple arc and a set with non-empty interior in $\mathbb{R}^2$ (for example, take a preimage of a point in $\Int f(\mathbb{S}^1)$, it is a cut-point of $\mathbb{S}^1$, but not of $\Int f(\mathbb{S}^1)$).
			
			\item The statement follows, for example, from Theorem 2.1 in \cite{W47}. 
						
			\item Since all of the components of $\mathbb{R}^2 \setminus f(\mathbb{S}^1)$ having nonzero index are contained in a compact domain, it follows by standard compactness arguments that in order to prove the statement it suffices to show that an arbitrary point $z\in \mathbb{R}^2$ has a neighborhood $U(z)$ such that the set of indices (the winding numbers) of points in $U(z)\setminus f(\mathbb{S}^1)$ is finite. 
			If $z\notin f(\mathbb{S}^1)$ then $z$ has a neighborhood where all points are of the same index.
			Let $z$ be a point of~$f(\mathbb{S}^1)$.
			By assertion~(2) %of Proposition~\ref{prop:locally-injective-properties}, 
			there exists $\varepsilon_0 > 0$ such that all connected components of $V_{\varepsilon_0}:= f^{-1}(D_{\varepsilon_0}(z))$, where $D_{\varepsilon_0}(z)$ is the open disk of radius $\varepsilon_0$ centered at~$z$, are (open) $f$-simple arcs.
%			
%the restriction of~$f$ to each connected component of $V_{\varepsilon_0}:= f^{-1}(D_{\varepsilon_0}(b))$, where $D_{\varepsilon_0}(b)$ is the open disk of radius $\varepsilon_0$ centered at~$b$, is injective.
%			
By assertion~(1), only a finite number of these $f$-simple arcs of~$V_{\varepsilon_0}$ intersect $f^{-1}(z)$. Let $\gamma_1$, \dots, $\gamma_k$ be the closures of the $f$-simple arcs of~$V_{\varepsilon_0}$ intersecting $f^{-1}(z)$. Passing to a smaller~$\varepsilon_0$ if necessary, we may assume that for each $i\in\{1,\dots,k\}$, the closed arc $\gamma_i$ is $f$-simple as well and $f(\gamma_i)$ is a closed simple arc (not a loop) with two distinct endpoints on the circle $\partial D_{\varepsilon_0}(z)$.

Let $f'\colon \mathbb{S}^1 \to \mathbb{R}^2$ be a curve (not necessarily locally-simple) of the following form: 
\begin{itemize}
\item[--] for each $i\in\{1,\dots,k\}$, the restriction of $f'$ to $\gamma_i$ is injective and the image $f'(\gamma_i)$ is an arc of $\partial D_{\varepsilon_0}(z)$ with the same pair of endpoints as that of $f(\gamma_i)$; 
\item[--] for each $t\in \mathbb{S}^1 \setminus(\gamma_1\cup\dots\cup\gamma_k)$ we set $f'(t)=f(t)$.
\end{itemize}

Observe that for each $i\in\{1,\dots,k\}$, the union $f'(\gamma_i)\cup f(\gamma_i)$ is a Jordan curve. By the Jordan--Schoenflies theorem this curve bounds an open topological disk $D_i\subset D_{\varepsilon_0}(z)$. By construction, for any point $y\in D_{\varepsilon_0}(z)\setminus f(\mathbb{S}^1)$ the index of~$y$ with respect to~$f$ differs from the index of~$y$ with respect to~$f'$ by at most~$k$.

Next, observe that $z\notin f'(\mathbb{S}^1)$ so that by compactness of~$f'(\mathbb{S}^1)$ there exists $\varepsilon_1\in(0,\varepsilon_0)$ such that the open disk $D_{\varepsilon_1}(z)$ of radius $\varepsilon_1$ centered at~$z$ does not intersect~$f'(\mathbb{S}^1)$. Consequently, all of the points in $D_{\varepsilon_1}(z)\setminus f(\mathbb{S}^1)$ have the same index with respect to $f'$ and the set of their indices with respect to~$f$ has cardinality at most $k+1$. It remains to set $U(z):=D_{\varepsilon_1}(z)$, which completes the proof. \qedhere
		\end{enumerate} 
	\end{proof}

	\begin{theorem}
	\label{th:sph-is-infinite}
	Let~\,$\mathbb{S}^1$ be a Euclidean circle in~$\mathbb{R}^2$, let $d$ be the angular distance on~\,$\mathbb{S}^1$, and let $f\colon \mathbb{S}^1\to\mathbb{R}^2$ be a continuous map. Then $\Omega_f^{sph}$ for $(\mathbb{S}^1,d)$ is infinite.
	\end{theorem}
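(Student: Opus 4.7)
The plan is to split the proof into two cases based on local injectivity of $f$.

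If $f$ is not locally injective, Proposition~\ref{prop:non-locally-injective} furnishes arbitrarily close $f$-neighbors. Since $f$-neighbors are spherical $f$-neighbors by Definition~\ref{def:f-neighbors}, the set $\Omega_f^{sph}$ contains a sequence of values tending to~$0$ and hence is infinite.

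Now suppose $f$ is a locally-simple curve. Proposition~\ref{prop:locally-simple-properties}(4) supplies a bounded connected component~$\mathcal{O}$ of $\mathbb{R}^2\setminus f(\mathbb{S}^1)$, and since $f(\mathbb{S}^1)$ is closed and connected, Proposition~\ref{prop:simply-connected}(2) shows that $\mathcal{O}$ is simply connected. I would then apply Lemma~\ref{lem:ruled} to produce a ruled sequence $(H_k)_{k=1}^{\infty}$ of good chords in~$\overline{\mathcal{O}}$. For each $k$, the good disk $B_k$ containing $H_k$ satisfies $\operatorname{Int} B_k\subset\mathcal{O}$, whence $\operatorname{Int} B_k\cap f(\mathbb{S}^1)=\emptyset$; thus the endpoints $p_k,q_k$ of $H_k$ are spherical neighbors in $f(\mathbb{S}^1)$ (by the remark preceding Lemma~\ref{lem:ruled}), and any preimages $a_k\in f^{-1}(p_k)$, $b_k\in f^{-1}(q_k)$ form a pair of spherical $f$-neighbors contributing $d(a_k,b_k)\in\Omega_f^{sph}$.

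The main step, and the main obstacle, is to guarantee that infinitely many \emph{distinct} distances arise despite an adversarial choice of preimages. My strategy is by contradiction: suppose $\Omega_f^{sph}=\{c_1,\dots,c_N\}$. Combining the density of good points (Lemma~\ref{lem:good-points-dense}) with the geometry of inscribed good disks in the bounded simply connected region $\mathcal{O}$, I would locate a good point $q_0\in\partial\mathcal{O}$ that is a common endpoint of infinitely many good chords with pairwise distinct second endpoints~$(p_k)_{k=1}^{\infty}$. Since $f^{-1}(q_0)$ is finite by Proposition~\ref{prop:locally-simple-properties}(1), after pigeonholing one may assume all chosen preimages $b_k\in f^{-1}(q_0)$ equal a single point~$b_0\in\mathbb{S}^1$. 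The constraint $d(a_k,b_0)\in\{c_1,\dots,c_N\}$ then confines each $a_k$ to one of at most $2N$ positions on $\mathbb{S}^1$, since for any prescribed angular distance $c_i$ at most two points of~$\mathbb{S}^1$ lie at that distance from~$b_0$. Consequently $p_k=f(a_k)$ takes at most $2N$ values, contradicting the pairwise distinctness of~$(p_k)_k$. The technical heart of the proof lies in producing the point $q_0$ together with the required infinite family of good chords through it; I expect this to follow either by exhibiting a good disk~$B$ with $|B\cap\partial\mathcal{O}|$ infinite (as happens whenever $\partial\mathcal{O}$ contains a circular arc of $\partial B$), or, in the general case, by iterating the inscribed-disk constructions underlying Lemma~\ref{lem:ruled} together with the density argument of Lemma~\ref{lem:good-points-dense}.
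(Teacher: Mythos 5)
Your first case (non--locally-injective, via Proposition~\ref{prop:non-locally-injective}) and your setup for the locally-simple case (bounded component $\mathcal{O}$, simple connectedness via Proposition~\ref{prop:simply-connected}, good chords via Lemma~\ref{lem:ruled}) coincide with the paper. But the core of your argument rests on a claim that is false in general: that one can always find a good point $q_0\in\partial\mathcal{O}$ which is a common endpoint of infinitely many good chords with pairwise distinct second endpoints. Take $f$ to be an embedding whose image is an ellipse (or any strictly convex curve containing no circular arc). Then any ball witnessing that two points of the curve are spherical neighbors must be an inscribed ball: its open disk contains the open chord between the two points, which lies in the inside region, so the disk lies inside the curve. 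For the ellipse the inscribed disks meeting the boundary in at least two points are exactly the medial-axis disks, each touching the ellipse in precisely two points symmetric about the major axis, and every non-vertex boundary point lies on exactly \emph{one} such disk. Hence every point of $\partial\mathcal{O}$ has exactly one spherical-neighbor partner and lies on exactly one good chord; no $q_0$ of the kind you need exists. Your contradiction hypothesis (finiteness of $\Omega_f^{sph}$) cannot rescue this, because the existence of $q_0$ is a purely geometric property of the image curve, independent of that hypothesis; and your fallback suggestions (a circular arc in $\partial\mathcal{O}$, or iterating the inscribed-disk construction) also fail for the ellipse. So the pigeonhole step --- ``at most $2N$ admissible positions for $a_k$ on $\mathbb{S}^1$, hence at most $2N$ values of $p_k$'' --- never gets off the ground, even though that step itself is sound.

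This is exactly the difficulty the paper's proof is built to overcome: it chooses $\mathcal{O}$ to be a component where the winding number is extremal, selects \emph{positive} preimages $a_i,b_i$ of the endpoints of a ruled sequence of good chords, passes to monotone subsequences in $f$-simple arcs, and then rules out the only dangerous configuration (both sequences strictly monotone in the same direction) by an orientation/Jordan--Schoenflies argument. In the ellipse-type situation the infinitude of distances comes from the fact that the two endpoint sequences move in ``opposite'' directions along the circle (case (C2.2.1) of the paper), not from a single point with many partners. To repair your proof you would need an argument covering the case in which every boundary point has only finitely many spherical partners, which is essentially the content of the paper's positivity and monotonicity analysis; as written, your proposal has a genuine gap at its main step.
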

	
	\begin{proof}
	%\begin{enumerate}
	%	\item[Case~1:] $f$ is not locally-simple. In this case $\Omega_f^{sph}$ is infinite by Proposition~\ref{prop:non-locally-injective} because $f$\nobreakdash-neighbors are spherical $f$-neighbors.
	%	\item[Case~2:] $f$ is locally-simple.
	%	\end{enumerate}
	%We distinguish the case where $f$ is locally-simple and the case where $f$ is not locally-simple. 
	Our proof is a case-by-case analysis.
	On its first level we distinguish two basic cases, the case where $f$ is locally-simple and the case where it is not.
If $f$ is not locally-simple then $\Omega_f^{sph}$ is infinite by Proposition~\ref{prop:non-locally-injective} because $f$\nobreakdash-neighbors are spherical $f$-neighbors. 
	The rest of the proof addresses subcases of the case
	\begin{enumerate}
	\item[(C2)] $f$ is locally-simple.
	\end{enumerate}
		
The following exposition consists of two parts.
%In this case the proof will consist of two parts. 
First we give some preliminaries and introduce the notion of \emph{positive points}, and then we pass to the core construction of the proof, which involves studying sequences of pairs of positive points.
			
	We will use the classical notion of \emph{winding number} (or \emph{winding index}) of a closed curve in~$\mathbb{R}^2$ around a given point (which we also refer to as the index of a point with respect to a curve). The index of a point~$q$ with respect to a curve~$\gamma\colon\mathbb{S}^1\to\mathbb{R}^2$ will be denoted by $\wind(q,\gamma)$.
			The index depends on orientations.
			We fix orientations on~$\mathbb{S}^1$ and on~$\mathbb{R}^2$.
			For convenience of description we assume that~$\mathbb{R}^2$ is located ‘in front of us’ in $\mathbb{R}^3$ in such a way that any Euclidean circle (in our~$\mathbb{R}^2$) oriented counterclockwise has winding index~$1$ with respect to the points inside~it.
			Under this assumption, if we have, say, a smooth closed curve in general position then the indices of any two adjacent components of the curve complement differ by~$1$: the component with the larger index is on the \emph{left} side of the curve when we move along the curve according to its orientation. 
			
			%Another related notion we use is that of positive points, which we now define. 
			%If $A$ is a simple arc in $\mathbb{R}^2$ with its relative interior in~$\mathbb{R}^2\setminus f(\mathbb{S}^1)$ and $\xi\in f(\mathbb{S}^1)$ is an endpoint of~$A$, we say that a point $p\in f^{-1}(\xi)$ is \emph{positive} for~$A$ if for some 
			
			Next we introduce the notion of positive points.
			Let $H$ be a simple closed arc in~$\mathbb{R}^2$ such that the relative interior~$H^\circ$ of~$H$ does not intersect~$f(\mathbb{S}^1)$ and an endpoint $\xi$ of $H$ is in $f(\mathbb{S}^1)$.
			Let $x$ be a point in~$f^{-1}(\xi)$. 
			%Let $O$ be a component of ${\mathbb{R}^2\setminus f(\mathbb{S}^1)}$, let $H$ be a chord in $\overline{{O}}$ (see Definition~\ref{def:ruled}), let $\xi\in \partial{{O}}$ be an endpoint of~$H$, and let $x$ be a point in~$f^{-1}(\xi)$. 
			Since $f$ is locally-simple, it follows that there exists a closed $f$-simple arc~$\gamma$ containing~$x$ in its interior.
			The Jordan--Schoenflies theorem implies~\cite{Th11} that there exists an orientation preserving homeomorphism $\Psi\colon \mathbb{R}^2\to\mathbb{R}^2$ such that $\Psi(f(\gamma))$ is a straight segment. 
		We say that $x$ is a \emph{positive point} for~$H$ 
		%(and that $f(\gamma)$ is a \emph{positive arc} for~$H$) 
		if the arc $\Psi(H)$ adjoins our segment $\Psi(f(\gamma))$ on the left side when we move along $\Psi(f(\gamma))$ according to its orientation (induced by that of~$f(\gamma)$). 
		%(In what follows, in analogous situations we say that $\Psi(H)$ adjoins $\Psi(f(\gamma))$ on the left side with respect to a prescribed orientation.)
		It is straightforward to check by studying compositions of homeomorphism of $\mathbb{R}^2$ that, for $x$ and~$H$, this definition is independent of choices of $\Psi$ and~$\gamma$.
		%depends neither on the choice of $\Psi$ nor on the choice of~$\gamma$.
			
		In the above notation, let ${O}_H$ be the component of $\mathbb{R}^2\setminus f(\mathbb{S}^1)$ containing~$H^\circ$. 
		We claim that if ${O}_H$ is a component where the index attains its largest value then at least one point in $f^{-1}(\xi)$ is positive for~$H$. 
		%We claim that if, in the above notation, ${O}$ is a component where the index attains its largest value then at least one point in $f^{-1}(\xi)$ is positive for~$H$. 	
			
		To prove this claim, we need an auxiliary construction. Assertion~(2) of Proposition~\ref{prop:locally-simple-properties} implies that there exists $\varepsilon > 0$ such that the inverse image $V_{\varepsilon,\xi}= f^{-1}(D_{\varepsilon}(\xi))$, where $D_{\varepsilon}(\xi)$ is an open disk of radius $\varepsilon$ centered at $\xi$, is an (at most countable) collection of open $f$-simple arcs. By assertion~(1) of Proposition~\ref{prop:locally-simple-properties}, $f^{-1}(\xi)$ is finite so that only a finite number of these arcs intersect $f^{-1}(\xi)$. Let $k$ be the cardinality of $f^{-1}(\xi)$, let $x_1$, $\dots$, $x_k$ be the points of~$f^{-1}(\xi)$, and let $\gamma_1$, \dots, $\gamma_k$, where $\gamma_i\ni x_i$, be the closures of $f$-simple arcs of~$V_{\varepsilon,\xi}$ intersecting $f^{-1}(\xi)$. Passing to a smaller~$\varepsilon$ if necessary, we may assume that for each $i\in\{1,\dots,k\}$, the closed arc~$\gamma_i$ is $f$-simple as well and $f(\gamma_i)$ is a closed simple arc (not a loop) with two distinct endpoints on the circle $\partial D_{\varepsilon}(\xi)$. 	
		Observe that for each $i\in\{1,\dots,k\}$, $f(\gamma_i)$ splits $D_{\varepsilon}(\xi)$ into two parts, each of which is homeomorphic to $D_{\varepsilon}(\xi)$ by the Jordan--Schoenflies theorem. 
		%the set $D_{\varepsilon}(\xi)\setminus f(\gamma_i)$ 
		Let $D_i$ denote that one of these two parts which does not intersect~$H$, and let $J_i$ be the closed arc $\partial D_i \cap \partial D_{\varepsilon}(\xi)$ ($J_i$~is an arc of $\partial D_{\varepsilon}(\xi)$ and has the same endpoints as $f(\gamma_i)$).	

		Set $\delta_i=-1$ if $x_i$ is positive for~$H$ and $\delta_i=1$ otherwise.
		It is straightforward to check that if $f_1\colon \mathbb{S}^1 \to \mathbb{R}^2$ is a curve such that 
		$f_1(\gamma_1)=J_1$ and $f_1(t)=f(t)$ for $t\in \mathbb{S}^1\setminus\gamma_1$,
		then for an arbitrary point $q\in\mathbb{R}^2\setminus (f(\mathbb{S}^1)\cup f_1(\mathbb{S}^1))$ we have 
$$
\wind(q,f_1)=
\begin{cases}
\wind(q,f)+\delta_1 &\text{if}~ q\in D_1,\\
\wind(q,f) &\text{if}~ q\notin D_1.
\end{cases}
$$		
%$\wind(q,f_1)=\wind(q,f)$ if $q\notin D_1$ and $\wind(q,f_1)=\wind(q,f)+\delta_1$ if $q\in D_1$.
		
Applying this argument to all arcs $f(\gamma_i)$ we see that if $p$ is a point in~${O}_H$ and $q$ is a point in $\mathbb{R}^2\setminus f(\mathbb{S}^1)$ such that $p$ and $q$ are joined by a path that intersects $f(\mathbb{S}^1)$ only at points of $f(\gamma_1\cup\dots\cup\gamma_k)$ then 
\begin{equation}
\label{eq:wind}
\wind(q,f)=
\wind(p,f)+\sum_{\{i \mid  q\in D_i\}} \delta_i.
\end{equation}

Observe that if $\varepsilon_1 > 0$ is sufficiently small then the open disk~$D_{\varepsilon_1}(\xi)$ of radius~$\varepsilon_1$ centered at $\xi$ intersects~$O_H$ and~$D_1$ and does not intersect $f(\mathbb{S}^1)\setminus f(\gamma_1\cup\dots\cup\gamma_k)$. 
Then it follows by~\eqref{eq:wind} in an obvious way that if ${O}_H$ is a component where the index attains its largest value then at least one of $\delta_i$'s is negative so that 
at least one of $x_i$'s is positive for~$H$.
The claim is thus proved.
			
			We pass to the second part of the proof for case~(C2). 
			
			Assertions (4) and~(5) of Proposition~\ref{prop:locally-simple-properties} imply that there exists a bounded connected component of $\mathbb{R}^2 \setminus f(\mathbb{S}^1)$ where the index attains its extreme value. 
			Let~$\mathcal{O}$ be such a component.
			%Denote any such component by~$\mathcal{O}$. 
			By reversing the orientation of~$\mathbb{S}^1$ if necessary we get that in~$\mathcal{O}$ the index attains the largest value.\footnote{Notice that if our curve with its initial orientation is, e.\,g., a Euclidean circle oriented clockwise, then there is no bounded connected component of $\mathbb{R}^2 \setminus f(\mathbb{S}^1)$ where the index attains its largest value (which is $0$ in this case).}
			
			Observe that $\mathcal{O}$ is simply connected because it is a bounded connected component of $\mathbb{R}^2 \setminus f(\mathbb{S}^1)$ while $f(\mathbb{S}^1)$ is compact and connected (see Proposition~\ref{prop:simply-connected}).
			Then Lemma~\ref{lem:ruled} implies that there exists a ruled sequence of good chords in $\overline{\mathcal{O}}$.
			Let $(H_i)_{i=1}^\infty$ be such a sequence.
			%Fix an arbitrary orientation for each~$H_i$.
			%Let $\xi_i$ denote the initial, with respect to the orientation, endpoint of~$H_i$
			For each~$i$, let $\xi_i$ and $\zeta_i$ be the endpoints of~$H_i$ listed in an arbitrary order.
			%For each~$i$, choose an endpoint of~$H_i$, denote it by $\xi_i$, and let $\zeta_i$ be the other endpoint.
			%It is shown in the above part of the proof 
			The claim in the above part of the proof implies
			that if $H$ is a chord for~$\overline{\mathcal{O}}$ and $\xi$ is an endpoint of~$H$, then at least one point in $f^{-1}(\xi)$ is positive for~$H$.
			For each~$i$, let $a_i\in f^{-1}(\xi_i)$ and $b_i\in f^{-1}(\zeta_i)$ be any points positive for~$H_i$.
			(Note that $a_i$ and $b_i$ are spherical $f$-neighbors because $\xi_i$ and $\zeta_i$ are the endpoints of a good chord.)
			
			Passing to subsequences if necessary, we may and will assume that each of the sequences $(a_i)_{i=1}^\infty$ and $(b_i)_{i=1}^\infty$ is 
			\begin{itemize}
			\item[(A1)] convergent (since $\mathbb{S}^1$ is compact),
			\item[(A2)] contained in a closed $f$-simple arc (since $f$ is assumed to be locally-simple),
			\item[(A3)] monotone in a containing $f$-simple arc (since each infinite numerical sequence contains an infinite monotone subsequence).
			\end{itemize}
(The correctness of transition to subsequences is ensured by the fact that subsequences of ruled sequences are ruled as well.)

Let $J_a\subset\mathbb{S}^1$ and $J_b\subset\mathbb{S}^1$ be closed $f$-simple arcs containing $(a_i)_{i=1}^\infty$ and $(b_i)_{i=1}^\infty$ respectively and such that $(a_i)_{i=1}^\infty$ is monotone in~$J_a$ and $(b_i)_{i=1}^\infty$ is monotone in~$J_b$.

We distinguish two subcases of (C2):
		\begin{enumerate}
		\item[(C2.1)] Either $(a_i)_{i=1}^\infty$ or $(b_i)_{i=1}^\infty$ is eventually constant. 
		\item[(C2.2)] None of $(a_i)_{i=1}^\infty$ and $(b_i)_{i=1}^\infty$ is eventually constant.
		\end{enumerate} 
 
If one of $(a_i)_{i=1}^\infty$ and $(b_i)_{i=1}^\infty$ is eventually constant then the second one is not (because $(H_i)_{i=1}^\infty$ is not eventually constant by construction), whence it follows that $\{d(a_i,b_i)\}_{i=1}^\infty$ is infinite. 
%(since $d$ is intrinsic while no `metric circle' in $\mathbb{S}^1$ with intrinsic metric contains more than two points). 
Then $\Omega_f^{sph}$ is infinite since $\Omega_f^{sph}$ contains $\{d(a_i,b_i)\}_{i=1}^\infty$ because $a_i$ and $b_i$ are spherical $f$-neighbors.

The rest of the proof deals with subcase~(C2.2).

In this subcase, since any monotone sequence is either eventually constant or contains a strictly monotone subsequence, passing again to subsequences if necessary we may and will assume in addition that
			\begin{itemize}
			\item[(A4$_a$)] $(a_i)_{i=1}^\infty$ is strictly monotone in $J_a$, 
			\item[(A4$_b$)] $(b_i)_{i=1}^\infty$ is strictly monotone in~$J_b$.
			\end{itemize}

Now, we have two subsubcases of (C2.2):
		\begin{enumerate}
		\item[(C2.2.1)] either $(a_i)_{i=1}^\infty$ is strictly increasing (in~$J_a$ with respect to the fixed  orientation of~$\mathbb{S}^1$) and $(b_i)_{i=1}^\infty$ is strictly decreasing (in~$J_b$ with respect to the fixed  orientation of~$\mathbb{S}^1$) or vice versa,
		\item[(C2.2.2)] $(a_i)_{i=1}^\infty$ and $(b_i)_{i=1}^\infty$ are either both strictly increasing or both strictly decreasing.
		\end{enumerate} 

		In case (C2.2.1) a straightforward geometric analysis shows that $\{d(a_i,b_i)\}_{i=1}^\infty$ is infinite. Then $\Omega_f^{sph}$ is infinite since $\Omega_f^{sph}$ contains $\{d(a_i,b_i)\}_{i=1}^\infty$ because $a_i$ and $b_i$ are spherical $f$-neighbors.

		Thus our proof boils down to the very specific subcase (C2.2.2). We will show that this subcase is ‘empty,’ i.\,e., no continuous map $f\colon \mathbb{S}^1\to\mathbb{R}^2$ satisfies all conditions determining this subcase. Denote by $\mathcal{A}$ the closed subarc of the simple arc $f(J_a)$ with endpoints at~$f(a_1)$ and $f(a_3)$, and let $\mathcal{B}$ be the closed subarc of $f(J_b)$ with endpoints at~$f(b_1)$ and $f(b_3)$.
		
		We split (C2.2.2) into two subcases: 
		\begin{enumerate}
		\item[(C2.2.2.1)] $\mathcal{A} \cap \mathcal{B} = \emptyset$,
		\item[(C2.2.2.2)] $\mathcal{A} \cap \mathcal{B} \neq \emptyset$.
		\end{enumerate} 
		
	In the first subcase (C2.2.2.1) observe that the set $$Q(\mathcal{A}, \mathcal{B}) = \mathcal{A} \cup \mathcal{B} \cup H_1 \cup H_3$$ is a Jordan curve and hence by the Jordan--Schoenflies theorem there is a homeomorphism $\psi\colon\mathbb{R}^2 \to \mathbb{R}^2$ that maps $Q(\mathcal{A}, \mathcal{B})$ to a Euclidean circle $\mathbb{S}_0^1$. 
	%Let $\mathcal{A}' = \psi(\mathcal{A})$ and $\mathcal{B}' = \psi(\mathcal{B})$. %Choose an orientation on $\mathbb{S}_0^1$. 
	 
	Let $r_a$ and $r_b$ denote the orientations of $\psi(\mathcal{A})$ and $\psi(\mathcal{B})$ determined by the orderings/directions $\psi(f(a_1))\to \psi(f(a_2))\to \psi(f(a_3))$ and $\psi(f(b_1))\to \psi(f(b_2))\to \psi(f(b_3))$ respectively.	 
	
	Then $r_a$ and $r_b$ induce opposite orientations on~$\mathbb{S}^1_0$. This means in particular that 
	\begin{itemize}
	\item[(P1)] if $X$ is a component of~$\mathbb{R}^2\setminus \mathbb{S}^1_0$ then either 
	$X$ adjoins $\psi(\mathcal{A})$ on the left side when we move along $\psi(\mathcal{A})$ according to $r_a$ and $X$ adjoins $\psi(\mathcal{B})$ on the right side when we move along $\psi(\mathcal{B})$ according to $r_b$, or vice versa.
	%
	%$X$ adjoins $\psi(\mathcal{A})$ on the left side with respect to~$r_a$ and $X$ adjoins $\psi(\mathcal{B})$ on the right side with respect to~$r_b$, or vice versa.
	\end{itemize}
	
	At the same time, since we address subcase (C2.2.2), where $(a_i)_{i=1}^\infty$ and $(b_i)_{i=1}^\infty$ are either both strictly increasing or both strictly decreasing, it follows that $r_a$ and~$r_b$ induce, via the map $\psi\circ f$, one and the same orientation on~$\mathbb{S}^1$.	
	Therefore, since $a_2$ and $b_2$ are positive for~$H_2$ by construction, it follows by the definition of positive points that 
	\begin{itemize}
	\item[(P2)]
	either $\psi(H_2)$ adjoins both $\psi(\mathcal{A})$ and $\psi(\mathcal{B})$ on the left side 
	%when we move along $\psi(\mathcal{A})$ and $\psi(\mathcal{B})$ according to $r_a$ and $r_b$,
	with respect to $r_a$ and $r_b$,
	or $\psi(H_2)$ adjoins both $\psi(\mathcal{A})$ and $\psi(\mathcal{B})$ on the right side with respect to $r_a$ and $r_b$.
	\end{itemize}
	
	However this is impossible: conditions~(P1) and~(P2) are incompatible because $\psi(H_2)$ intersects only one component of~$\mathbb{R}^2\setminus \mathbb{S}^1_0$.
	
	This shows that subcase (C2.2.2.1) is ‘empty.’
	
	In the second subcase (C2.2.2.2) where $\mathcal{A}$ and $\mathcal{B}$ intersect and it is possible that no Jordan curve contains them, we will study two Jordan curves. Let $m_i$ denote the middle point of $H_i$, and let $\tau$ be a simple polygonal path in~$\mathcal{O}$ with endpoints $m_1$ and $m_3$ and such that $\tau\cap H_i=\{m_i\}$ for $i\in\{1,2,3\}$.  %??? existence
	Let $H_i^a$ denote the half of $H_i$ joining $f(a_i)$ and~$m_i$, and let $H_i^b=\overline{H_i\setminus H_i^a}$.
	Observe that the sets 
	$$Q(\mathcal{A}, \tau) = \mathcal{A} \cup  \tau \cup H_1^a \cup H_3^a$$ 
	and 
	$$Q(\tau, \mathcal{B}) = \tau \cup \mathcal{B} \cup H_1^b \cup H_3^b$$ 
	are Jordan curves.
	Applying argument of subcase (C2.2.2.1) to these Jordan curves and using their common arc~$\tau$ we arrive at a similar set of incompatible conditions (details are left to the reader), which shows that subcase (C2.2.2.2) is ‘empty’ as well. 
This completes the proof of Theorem~\ref{th:sph-is-infinite}.	
	\end{proof}
	
	\begin{corollary}
	\label{cor:sph-is-infinite}
Let $S^1$ be a topological circle, let $d$ be an intrinsic metric on~$S^1$ compatible with its topology, and let $f\colon S^1\to\mathbb{R}^2$ be a continuous map. Then $\Omega_f^{sph}$ for $(S^1,d)$ is infinite.
	\end{corollary}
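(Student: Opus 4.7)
The plan is to reduce the corollary directly to Theorem~\ref{th:sph-is-infinite} via the classification of intrinsic metrics on a topological circle.

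First, I would establish that $(S^1,d)$ is isometric to a Euclidean circle equipped with a constant multiple of the angular distance. Since $S^1$ is compact and $d$ is compatible with its topology, $d$ is bounded; from this one deduces that the total length $L$ of $S^1$ in the metric~$d$ is finite. (Informally: for any two points $p,q\in S^1$, the two complementary arcs have lengths $\ell_1(p,q)$ and $\ell_2(p,q)$ with $d(p,q)=\min(\ell_1,\ell_2)$, and a length-continuity argument forces $\ell_1+\ell_2\le 2\diam(S^1,d)<\infty$.) Parametrizing $S^1$ by arc length from an arbitrary basepoint yields a continuous bijection $\mathbb{R}/L\mathbb{Z}\to S^1$ that is an isometry. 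Composing with the natural rescaling, one obtains an isometry $\Phi\colon(\mathbb{S}^1, c\cdot d_\mathrm{ang})\to(S^1,d)$ with $c=L/(2\pi)$, where $d_\mathrm{ang}$ is the angular distance on the Euclidean unit circle~$\mathbb{S}^1$.

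Next, I would transfer the problem via $\Phi$. Set $g=f\circ\Phi\colon\mathbb{S}^1\to\mathbb{R}^2$. Since the notion of spherical neighbors in Definition~\ref{def:f-neighbors} depends only on the map into~$\mathbb{R}^m$ and not on any metric on the domain, a pair $(a,b)\in\mathbb{S}^1\times\mathbb{S}^1$ consists of spherical $g$-neighbors if and only if $(\Phi(a),\Phi(b))$ consists of spherical $f$-neighbors in~$S^1$. Applying Theorem~\ref{th:sph-is-infinite} to~$g$, the set $\Omega_g^{sph}$ for $(\mathbb{S}^1,d_\mathrm{ang})$ is infinite. Multiplying the metric by the positive constant~$c$ scales this set by~$c$ without changing its cardinality, so $\Omega_g^{sph}$ for $(\mathbb{S}^1,c\cdot d_\mathrm{ang})$ is also infinite; and since $\Phi$ is an isometry, the latter set coincides with $\Omega_f^{sph}$ for $(S^1,d)$, which proves the corollary.

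The main obstacle is verifying the classification used in the first step. This is a standard length-space fact, requiring us to check that arc length is well-defined on~$S^1$, that the total length is finite, and that the arc-length parametrization is globally isometric. All three facts follow from standard properties of intrinsic metrics on compact length spaces, the finiteness of the total length being the subtlest point; it follows from the boundedness of~$d$ combined with the identity $d(p,q)=\min(\ell_1(p,q),\ell_2(p,q))$ for the complementary arc lengths and the continuity of the length functional in the endpoints of a subarc.
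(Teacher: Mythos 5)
Your proposal is correct and follows essentially the same route as the paper: both reduce the corollary to Theorem~\ref{th:sph-is-infinite} by identifying $(S^1,d)$, up to a homeomorphism and a constant factor, with the Euclidean circle carrying the angular metric, and by noting that spherical $f$-neighbors depend only on the map into~$\mathbb{R}^2$ while the rescaling does not affect the cardinality of the distance set. The only difference is that the paper simply asserts this classification of intrinsic metrics on a topological circle, whereas you sketch its proof (finiteness of total length and the arc-length parametrization), which is a reasonable standard length-space argument.
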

	
	\begin{proof}
	This follows from Theorem~\ref{th:sph-is-infinite} because any intrinsic metric on~$S^1$ is proportional to the metric induced by the angular metric on a Euclidean circle~$\mathbb{S}^1\subset \mathbb{R}^2$ via a homeomorphism~$S^1\to \mathbb{S}^1$. 
%Therefore, it is enough to prove the theorem for the angular metric on a Euclidean circle~$\mathbb{S}^1$.	
	\end{proof}
	
	%\begin{remark}
	%	Observe that Theorem~\ref{th:sph-is-infinite} is true not only for intrinsic metrics, but also for a wider class of metrics. Namely if a given metric is compatible with the standard topology on $\mathbb{S}^1$ and for any ruled sequence in the Euclidean disk $D^1$ ($\partial{D^1} = \mathbb{S}^1$), there exists a ruled subsequence such that the metric is monotone on endpoints of chords of this subsequence, then this metric is suitable.  
	%\end{remark}
	
	\section{A quantitative generalisation of the Hopf theorem}
	
	Let $n$ be a positive integer. The Hopf theorem states that if $M$ is a compact Riemannian manifold of dimension $n$ and $f\colon M \to \mathbb{R}^n$ is a continuous map, then for an arbitrary $\delta > 0$, there are points $a$ and $b$ in $M$ such that they are joined by a geodesic of length $\delta$ and $f(a) = f(b)$. 
	
	In this section we prove that if in addition to the assumptions of the Hopf theorem we assume that $n>1$ and no two points in $M$ (not necessarily distinct) are joined by an infinite number of geodesics of length $\delta > 0$, then $M \times M$ contains uncountably many pairs of points such that $f$ takes each pair to a singleton and points in each pair are joined by a geodesic of length~$\delta$.
	
	First we recall Hopf's original proof of his theorem and then we prove our theorem using the main idea of Hopf's proof.
	
	\begin{proof}		
		Without loss of generality we can suppose that in $M$ there are no closed geodescis of length $\delta$.
		
		Let $p \in M$, and let $T_pM$ be the tangent space at $p$. Choose a standard basis in $T_pM$, and let $\mathbb{S}_{p}^{n-1} \subset T_pM$ be the unit sphere. It is well known that for a given tangent vector $\mathfrak{F} \in T_pM$, there exists a unique geodesic with the tangent vector $\mathfrak{F}$ at $p$. Thus for each $\mathfrak{F} \in \mathbb{S}_{p}^{n-1}$, there is a unique geodesic, which we denote by $\gamma_{\mathfrak{F}}$.
		
		We will denote by $\gamma_{\mathfrak{F}}(s)$, where $s \in \mathbb{R}$, the geodesic $\gamma_{\mathfrak{F}}$ with a natural \sloppy parametrization, and we will suppose that $\gamma_{\mathfrak{F}}(0) = p$ and $\frac{d}{ds}(\gamma_{\mathfrak{F}}(s))|_{s=0} = \mathfrak{F}$ for each $\mathfrak{F} \in \mathbb{S}_{p}^{n-1}$. Observe that $\gamma_{\mathfrak{F}}$ and $\gamma_{\mathfrak{-F}}$ are the same geodesics, but $\gamma_{\mathfrak{F}}(s)$ and $\gamma_{\mathfrak{-F}}(s)$ differ in the direction of moving when $s$ increases. 
		
		Study the vector $V(\mathfrak{F})_{s, p, \delta} := f(\gamma_{\mathfrak{F}}(s + \delta)) - f(\gamma_{\mathfrak{F}}(s))$. Suppose that the statement of Theorem 1 is not true, and let $v(\mathfrak{F})_{s, p, \delta} = \frac{V(\mathfrak{F})_{s, p, \delta}}{|V(\mathfrak{F})_{s, p, \delta}|}$, where $|V(\mathfrak{F})_{s, p, \delta}|$ denotes the Euclidean norm of $V(\mathfrak{F})_{s, p, \delta}$ in $\mathbb{R}^n$. We get continuous maps $T_{s,p,\delta} \colon \mathbb{S}_p^{n-1} \to \mathbb{S}^{n-1}$ for each $p \in M$ and $s \in \mathbb{R}$, where $\mathbb{S}^{n-1}$ is the Euclidean sphere. Note that $T_{-\delta/2,p,\delta}$ is antipodal and hence the modulo 2 degree of $T_{-\delta/2,p,\delta}$ is 1 for each $p \in M$.
		
		Denote by $f_1, \dots, f_n$ coordinate functions of $f$. By compactness arguments there is $\xi \in M$ such that $f_n(\xi) = \max_{p\in M}{f_n(p)}$. This means that $T_{0,\xi,\delta}(\mathbb{S}_{\xi}^{n-1})$ belongs to the hemisphere $x_n \leq 0$ and hence the modulo 2 degree of $T_{0,\xi,\delta}$ is zero. But there is a homotopy between $T_{0,\xi,\delta}$ and $T_{-\delta/2,\xi,\delta}$ by $s$. This is a contradiction. \qedhere
		
	\end{proof}
	
	\begin{definition}
		Let $n$ be a positive integer, and let $\delta$ be a positive real number. Let $M$ be a compact Riemannian manifold of dimension $n$, and let $f\colon M \to \mathbb{R}^n$ be a continuous map. We denote by $\mathcal{F}(\delta)$ the subset of $M \times M$ such that $\{a,b\} \in \mathcal{F}(\delta)$ if and only if $f(a) = f(b)$ and the points $a$ and $b$ are joined by a geodesic of length~$\delta$.
		
		We call points $a, b \in M$ $\delta$-\emph{conjugate} if they are joined by an infinite number of geodesics of length $\delta$. We denote the set of such points by $\mathcal{C}(\delta)$. 
	\end{definition}
	
	%Finally denote by $\mathcal{R}(\delta)$ a set of closed geodesics in $M$ of length $\delta$.

	\begin{theorem}
		Let $n$ be a positive integer such that $n > 1$, and let $\delta$ be a positive real number. Let $M$ be a compact Riemannian manifold of dimension $n$, and let $f\colon M \to \mathbb{R}^n$ be a continuous map. If $\mathcal{C}(\delta)$ is empty, then $\mathcal{F}(\delta)$ is uncountable. 
	\end{theorem}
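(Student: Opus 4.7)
The plan is to lift Hopf's argument to the unit tangent bundle. Set $\Delta\colon SM\to\mathbb{R}^n$ by $\Delta(p,\mathfrak{F}):=f(\exp_p(\delta\mathfrak{F}))-f(p)$; its zero set parametrizes triples consisting of a pair in $\mathcal{F}(\delta)$ together with a realizing $\delta$-geodesic. The hypothesis $\mathcal{C}(\delta)=\varnothing$ guarantees that each pair in $\mathcal{F}(\delta)$ lifts to only finitely many zeros of $\Delta$, so it suffices to prove that $\Delta^{-1}(0)\subseteq SM$ is uncountable. I argue by contradiction and assume it is countable.

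Let $B:=\pi(\Delta^{-1}(0))\subseteq M$, which is then countable. Since $n\ge 2$ and $M$ is connected, $M\setminus B$ is path-connected. Over $M\setminus B$ the fiberwise restriction $\Delta_p:=\Delta(p,\cdot)$ is nowhere zero, so $\phi_p:=\Delta_p/|\Delta_p|\colon\mathbb{S}_p^{n-1}\to\mathbb{S}^{n-1}$ is well-defined, and the integer $\deg\phi_p$ is locally constant in $p$, hence equals a single integer $d$ throughout $M\setminus B$. I compute $d$ in two incompatible ways. First, mimicking Hopf's homotopy, the centered map $\tilde\Delta_p(\mathfrak{F}):=f(\exp_p(\tfrac{\delta}{2}\mathfrak{F}))-f(\exp_p(-\tfrac{\delta}{2}\mathfrak{F}))$ is odd in $\mathfrak{F}$, so where nonvanishing its normalization $\tilde\phi_p$ is antipodal and has odd degree. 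The $s$-homotopy $V(\mathfrak{F})_{s,p,\delta}$ with $s\in[-\delta/2,0]$ from Hopf's proof connects $\tilde\Delta_p$ to $\Delta_p$, and is nowhere zero provided $p$ does not lie within geodesic distance $\delta/2$ of the initial endpoint of any $\delta$-geodesic realizing a pair of $\mathcal{F}(\delta)$. The set $T'$ of such bad $p$ is a countable union of arcs of length $\delta/2$, hence nowhere dense in $M$ since $n\ge 2$; therefore, for $p\in M\setminus T'\subseteq M\setminus B$, homotopy invariance of degree yields $d=\deg\phi_p=\deg\tilde\phi_p$, which is odd. Second, I choose $u\in\mathbb{S}^{n-1}$ and a maximizer $\xi\in\operatorname*{arg\,max}\langle f,u\rangle$ lying in $M\setminus B$: then $\langle\Delta_\xi(\mathfrak{F}),u\rangle\le 0$ for every $\mathfrak{F}$, and since $\Delta_\xi$ avoids $0$, the image of $\phi_\xi$ lies in the closed hemisphere $\{x\in\mathbb{S}^{n-1}:\langle x,u\rangle\le 0\}$, which is contractible; hence $\deg\phi_\xi=0$, contradicting the oddness of $d$.

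The main obstacle is producing such a $\xi\in M\setminus B$. Note that $\bigcup_u\operatorname*{arg\,max}\langle f,u\rangle$ is the $f$-preimage of the exposed points of $\operatorname{conv} f(M)$; in the generic nondegenerate situation this preimage is uncountable and cannot be swallowed by the countable set $B$, giving a suitable $\xi$. When $\operatorname{conv} f(M)$ is contained in a proper affine subspace $H\subsetneq\mathbb{R}^n$, one instead picks any $\xi\in M\setminus B$ (nonempty because $B$ is countable) together with any $u$ perpendicular to the linear part of $H$: the image of $\Delta_\xi$ then lies in $u^\perp\setminus\{0\}$, so $\phi_\xi$ factors through the equatorial subsphere $\mathbb{S}^{n-2}\subset\mathbb{S}^{n-1}$ and is nullhomotopic, again forcing $\deg\phi_\xi=0$. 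The hypothesis $n>1$ is used essentially in the path-connectedness of $M\setminus B$, the nowhere-denseness of the arcs forming $T'$, and the existence of a proper equatorial subsphere used in the degenerate subcase.
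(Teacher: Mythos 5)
Your overall architecture parallels the paper's: both proofs run Hopf's degree/homotopy trick at variable basepoints, both observe that the set of "bad" basepoints is covered by countably many geodesic arcs (under the contradiction hypothesis) and hence cannot exhaust an open set, and both derive a contradiction between an odd degree and a zero degree. The genuine gap is exactly the step you flag as the main obstacle: producing a maximizer $\xi$ of some $\langle f,u\rangle$ with $\xi\notin B$. The set $\bigcup_{u}\arg\max\langle f,u\rangle$ equals $f^{-1}\bigl(f(M)\cap\partial\operatorname{conv}f(M)\bigr)$, and the claim that "in the generic nondegenerate situation this preimage is uncountable" is neither a case analysis nor true in general: even when $\operatorname{conv}f(M)$ is full-dimensional this set can be finite. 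For example, cut $\mathbb{S}^2$ into four closed lunes $R_1,\dots,R_4$ by the planes $x=0$ and $y=0$, let $X\subset\mathbb{R}^2$ be the union of the two diagonals of a square with center $O$, and map $x\in R_i$ to the point of the $i$-th arm of $X$ at parameter $\operatorname{dist}(x,\partial R_i)/\max_{R_i}\operatorname{dist}(\cdot,\partial R_i)$. This $f$ is continuous, $f(\mathbb{S}^2)=X$, $\operatorname{conv}f(M)$ is the full square, yet the union of all maximizer sets is the four-point fiber over the corners. Nothing in your argument prevents such a finite (or countable) set from lying inside the countable set $B$, in which case your two computations of $d$ never apply to comparable points and no contradiction is reached; your fallback (image in a proper affine subspace) addresses a different degeneracy and does not cover this situation.

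The paper avoids this precise difficulty by choosing the degree-zero point differently. It works with the topological boundary $\partial f(M)\subset\mathbb{R}^n$, which is uncountable unless $f(M)$ is a point, and splits into two cases: either every $\xi\in\partial f(M)$ already carries a pair of $\mathcal{F}(\delta)$ inside $f^{-1}(\xi)\times f^{-1}(\xi)$, in which case $\mathcal{F}(\delta)$ is uncountable outright; or some $\xi\in\partial f(M)$ carries none, and then for $q\in f^{-1}(\xi)$ one finds a point $\mathcal{B}\notin f(M)$ near $f(q)$ in the component of $\mathbb{R}^n\setminus f(\mathcal{S}^{n-1}_q)$ containing $f(q)$, and the contractibility of $f(\mathcal{S}^{n-1}_q)$ inside $f(M)\subset\mathbb{R}^n\setminus\{\mathcal{B}\}$ forces the degree of $T_{0,q,\delta}$ to vanish. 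This replaces your hemisphere argument, and the very choice of $\xi$ guarantees that there are no zeros of $\Delta$ over $f^{-1}(\xi)$, so the degree-zero point is automatically outside the bad set. If you substitute this dichotomy for your maximizer step, your proof closes; two smaller repairs are also needed: use the mod $2$ degree (the integer degree of $\phi_p$ is not canonically defined without orienting $M$ and trivializing the unit sphere bundle), and replace "nowhere dense" for the countable union of closed arcs $T'$ by "meager, hence with dense complement by the Baire category theorem" (or argue via Lebesgue measure, as the paper does).
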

	
	\begin{proof}
		
		If $f(M)$ is a singleton, then the result easily follows. Otherwise $\partial{f(M)}$ is uncountable (since $f(M)$ is bounded and connected). 
		
		If no $\xi$ in $\partial f(M)$ yields $(f^{-1}(\xi) \times f^{-1}(\xi)) \cap F(\delta) = \emptyset$, then the statement easily follows.
		
		Otherwise take $\xi \in \partial f(M)$ such that $(f^{-1}(\xi) \times f^{-1}(\xi)) \cap F(\delta) = \emptyset$ and let $q$ be a point in $f^{-1}(\xi)$. 
		
		Let $G_{\nu, q}$ be the radial projection with center at $\nu \in \mathbb{R}^n$ on a Euclidean sphere $\mathbb{S}^{n-1}$ centered at~$\nu$. 
		Denote by $\mathcal{S}_q^{n-1}$ the sphere of geodesic radius $\delta$ centered at~$q$, and let $\mathcal{O}$ be the connected component of $\mathbb{R}^n \setminus f(\mathcal{S}_q^{n-1})$ such that $f(q) \in \mathcal{O}$ (such a component exists since otherwise $(f^{-1}(\xi) \times f^{-1}(\xi)) \cap F(\delta) \neq \emptyset$). 
		
		There exists $\varepsilon > 0$ such that $U_{\varepsilon}(f(q)) \subset \mathcal{O}$, where $U_{\varepsilon}(f(q))$ is the $\varepsilon$-neighborhood of $f(q)$ in $\mathbb{R}^n$. Since $f(q)$ is in $\partial{f(M)}$, it follows that there exists a point $\mathcal{B} \in U_{\varepsilon}(f(q))$ such that $\mathcal{B} \notin f(M)$. Since $f(\mathcal{S}_q^{n-1})$ is contractible in $f(M)$, it is contractible in $\mathbb{R}^n \setminus \mathcal{B}$ as well. Hence the degree of $G_{\mathcal{B},q} \circ f\colon \mathcal{S}_q^{n-1} \to \mathbb{S}^{n-1}$ is zero as well as the degree of $G_{\xi,q} \circ f\colon \mathcal{S}_q^{n-1} \to \mathbb{S}^{n-1}$ and hence of ${T}_{0,q,\delta}$ (see the proof of the Hopf theorem for notation).

		From the continuity of~$f$ it follows that there is $\varepsilon_1 > 0$ such that for each $p \in U_{\varepsilon_1}(q)$, the degree of ${T}_{0,p,\delta}$ is zero, where $U_{\varepsilon_1}(q)$ is the $\varepsilon_1$-neighborhood of $q$ in $M$. First observe that no at most countable collection of geodesics in $M$ covers $U_{\varepsilon_1}(q)$, since any geodesic has zero $n$-dimensional Lebesgue measure. Second observe that through each point~$p$ in $U_{\varepsilon_1}(q)$ passes a geodesic $\gamma_p$ containing points in some pair in $\mathcal{F}(\delta)$ (the distance between points in this pair along $\gamma_p$ is $\delta$). Indeed, otherwise we could use the Hopf ‘trick’ and get a contradiction.

		Now if $\mathcal{F}(\delta)$ is at most countable, then there is a pair of points $\{\mu_1, \mu_2\} \in \mathcal{F}(\delta)$ such that $\mu_1$ and $\mu_2$ are joined by uncountably many geodesics of length $\delta$ and hence $\mathcal{C}(\delta)$ is not empty. This completes the proof. \qedhere

	\end{proof}

	%\bigskip
	

\begin{thebibliography}{9}
	
			\bibitem{E29}
		D. Attali, J-D. Boissonnat, H. Edelsbrunner, 
		{\it Stability and computation of medial axes: a state-of-the-art report}, 
		In: Mathematical Foundations of Scientific Visualization, Computer Graphics, and Massive Data Exploration, Springer, Berlin, 2009, pp.~109--125.		
		% MR2560510 (2012c:65034)
		%Stability and computation of medial axes: a state-of-the-art report. (English summary) Mathematical foundations of scientific visualization, computer graphics, and massive data exploration, 109вЂ“125, 
%Math. Vis., Springer, Berlin, 2009. 
		
		\bibitem{H44} 
		H. Hopf,
		{\it Eine Verallgemeinerung bekannter Abbildungs- und \"Uberdeckungss\"atze}, 
		Portugal. Math., {\bf 4} (1944), 129--139.
		
		\bibitem{L04}
		A. Lieutier, 
		{\it Any open bounded subset of $\mathrm{R}^n$ has the same homotopy type as its medial axis}, 
		Computer-Aided Design, {\bf 36} (2004), 1029--1046.
		%https://www.sciencedirect.com/science/article/abs/pii/S0010448504000065
		
		\bibitem{MM18} 
		A. V. Malyutin, O. R. Musin, 
		{\it Neighboring mapping points theorem}, 
		{\tt arXiv:1812.10895}, 2022, 1--21. 

		\bibitem{M88}	
		G. Matheron, 
		{\it Examples of topological properties of skeletons}, 
		In: Image Analysis and Mathematical Morphology. Vol.~2: Theoretical Advances,
		Academic Press, London, 1988, pp.~217--238.
				
		\bibitem{Th11} W. Thurston, A comment on \url{https://mathoverflow.net/questions/57766/why-are-there-no-wild-arcs-in-the-plane}. Accessed August 15, 2022.
		
		\bibitem{W47}		
		G. T. Whyburn, 
		{\it On locally simple curves}, 
		Bull. Amer. Math. Soc., {\bf 53} (1947), 986--992.
		
	\end{thebibliography}
\end{document}